\def\e{\e}
\def\H{\mathbb{H}}
\def\cross{\times}
\def\V{\mathcal{V}}
\def\W{\mathcal{W}}
\def\Xt{\widetilde{X}}
\def\e{\epsilon}
\def\ge{\geqslant}
\def\le{\leqslant}
\def\vol{\text{vol}}
\DeclareSymbolFont{AMSb}{U}{msb}{m}{n}
\DeclareMathSymbol{\Sph}{\mathbin}{AMSb}{"53}\DeclareMathSymbol{\Tor}{\mathbin}{AMSb}{"54}
\DeclareMathSymbol{\R}{\mathbin}{AMSb}{"52} \DeclareMathSymbol{\PP}{\mathbin}{AMSb}{"50}
\DeclareMathSymbol{\T}{\mathbin}{AMSb}{"54} \DeclareMathSymbol{\Z}{\mathbin}{AMSb}{"5A}
\DeclareMathSymbol{\C}{\mathbin}{AMSb}{"43} \DeclareMathSymbol{\E}{\mathbin}{AMSb}{"45}
\DeclareMathSymbol{\K}{\mathbin}{AMSb}{"4B}\DeclareMathSymbol{\N}{\mathbin}{AMSb}{"4E}
\def\T{{\cal{T}}}
\def\V{{\cal{V}}}
\newcommand{\norm}[1]{\left|#1\right|}
\newcommand{\Norm}[1]{\left|\left|#1\right|\right|}
\newtheorem{theorem}{Theorem}[section]
\newtheorem{proposition}[theorem]{Proposition}
\theoremstyle{definition}
\newtheorem{claim}[theorem]{Claim}
\newtheorem{definition}[theorem]{Definition}
\def\inj{\text{inj}}
\begin{document}

\title{Morse area and Scharlemann-Thompson width for hyperbolic $3$-manifolds}
\author{Diane Hoffoss\footnote{email: dhoffoss@sandiego.edu} \, and Joseph Maher\footnote{email: joseph.maher@csi.cuny.edu}}
\date{\today}

\maketitle

\tableofcontents

\begin{abstract}
Scharlemann and Thompson define a numerical complexity for a
$3$-manifold using handle decompositions of the manifold.  We show
that for compact hyperbolic $3$-manifolds this is linearly related to
a definition of metric complexity in terms of the areas of level sets
of Morse functions.
\end{abstract}

\section{Introduction}

Let $M$ be a closed Riemannian $3$-manifold, and let $f \colon M \to
\R$ be a Morse function, i.e. $f$ is a smooth function, all of whose
critical points are non-degenerate, and for which distinct critical
points have distinct images in $\R$. We define the \emph{area} of $f$
to the maximum area of any level set $F_t = f^{-1}(t)$ over all points
$x \in \R$. We define the \emph{Morse area} of $M$ to be the infimum
of the area of all Morse functions $f \colon M \to \R$.

For hyperbolic $3$-manifolds, the hyperbolic metric is a topological
invariant by Mostow rigidity, and the critical points of a Morse
function determine a handle decomposition of the manifold, so one might
hope that Morse area is related to a topological measure of complexity
defined in terms of handle decompositions of the manifold. We show
that Morse area is linearly related to a definition of topological
complexity we call \emph{Scharlemann-Thompson width} or \emph{linear
  width}, and which we now describe.

For a closed (possibly disconnected) surface $S$, we define the
complexity, or genus, of $S$ to be the sums of the genera of each
connected component. For a compact (possibly disconnected) surface
with boundary, we define the genus of $S$ to be the genus of the
surface obtained by capping off all boundary curves with discs. We
shall write $\norm{\partial S}$ for the number of boundary components
of $S$.

A \emph{handlebody} is a compact $3$-manifold with boundary,
homeomorphic to the regular neighborhood of a graph in
$\mathbb{R}^3$. Up to homeomorphism, a handlebody is determined by the
genus $g$ of its boundary surface.  Every $3$-manifold $M$ has a
\emph{Heegaard splitting}, which is a decomposition of the manifold
into two handlebodies. This immediately gives a notion of complexity
for a $3$-manifold, called the \emph{Heegaard genus}, which is the
smallest genus of any Heegaard splitting of the $3$-manifold.

There is a refinement of this, due to Scharlemann and Thompson
\cite{st}, which we now describe. Let $S$ be a closed surface, which
need not be connected. A \emph{compression body} $C$ is a compact
$3$-manifold with boundary, constructed by attaching some number of
$2$-handles to one side $S \cross \{ 0 \}$ of $S \cross I$. We do not
require compression bodies to be connected. We shall refer to $S
\cross \{1\}$ as the \emph{top boundary} $\partial_+ C$ of the
compression body, and the other boundary components of $C$ as the
\emph{lower boundary} $\partial_- C$. The lower boundary may be
disconnected, even if $C$ is connected, and any $2$-sphere components
are capped off with $3$-balls.  In particular, if a maximal number of
non-parallel $2$-handles are attached, then the resulting compression
body is a handlebody, so a handlebody is a special case of a
compression body.  A \emph{generalized Heegaard splitting}, which we
shall call a \emph{linear splitting}, is a decomposition of a closed
$3$-manifold $M$ into a linearly ordered sequence of compression
bodies $C_1, \ldots C_{2n}$, such that the upper boundary of an odd
numbered compression body $C_{2i+1}$ is equal to the top boundary of
the compression body $C_{2i+2}$, and the lower boundary of $C_{2i+1}$
is equal to the lower boundary of the previous compression body
$C_{2i}$. For the even numbered compression bodies $C_{2i}$, the top
boundary is equal to the upper boundary of $C_{2i-1}$, and the lower
boundary is equal to the lower boundary of $C_{2i+1}$. In the case of
the first and last compression bodies $C_1$ and $C_{2n}$, the lower
boundaries are empty. Let $H_i$ be the sequence of surfaces consisting
of the upper boundaries of the compression bodies $C_{2i-1}$ and
$C_{2i}$. The complexity $c(H_i)$ of the surface $H_i$ is the genus of
$H_i$, i.e. the sum of the genera of each connected component, and the
\emph{width} of the linear splitting is the maximum value of $c(H_i)$
over all upper boundaries. The \emph{Scharlemann-Thompson width},
which we shall also refer to as the \emph{linear width}, of a
$3$-manifold $M$ is the minimum width over all possible linear
splittings. As a Heegaard splitting is a special case of a linear
splitting, the Heegaard genus of $M$ is an upper bound for the linear
width of $M$.

There is a refinement of linear width known as \emph{thin position},
which we discuss when we use it in Section \ref{section:flow}.

\subsection{Results}

In order to bound Morse area in terms of linear width we shall assume
the following result announced by Pitts and Rubinstein \cite{pr} (see
also Rubinstein \cite{rubinstein}).

\begin{theorem} \cites{pr,rubinstein} \label{conjecture:pr} %
Let $M$ be a Riemannian $3$-manifold with a strongly irreducible
Heegaard splitting. Then the Heegaard surface is isotopic to a
minimal surface, or to the boundary of a regular neighborhood of a
non-orientable minimal surface with a small tube attached vertically
in the I-bundle structure.
\end{theorem}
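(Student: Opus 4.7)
The plan is to produce the minimal surface by a min-max / sweep-out argument in the style of Almgren--Pitts, adapted to the Heegaard setting by Pitts--Rubinstein (and later clarified by Simon--Smith and Colding--De Lellis). Strong irreducibility will play the role that is usually played by a topological nontriviality hypothesis, ensuring that the min-max limit retains the genus of the original splitting rather than collapsing to something simpler.

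First I would fix a sweep-out adapted to the Heegaard splitting $M = V_1 \cup_H V_2$. Concretely, pick a smooth function $h \colon M \to [-1,1]$ whose regular level sets for $t \in (-1,1)$ are surfaces isotopic to $H$, and whose extremal level sets $h^{-1}(\pm 1)$ are the spines of $V_1$ and $V_2$. Let $\Lambda$ be the space of sweep-outs obtained by composing $h$ with an ambient isotopy of $M$ (or more generally the saturation of $h$ in the sense of Colding--De Lellis), and define the min-max width
\[
W \;=\; \inf_{\{\Sigma_t\} \in \Lambda} \; \max_{t \in [-1,1]} \area(\Sigma_t).
\]
A standard homological/isoperimetric argument shows $W > 0$, since every sweep-out must, at some intermediate time, contain a surface separating the two spines. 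The Almgren--Pitts machinery (pull-tight plus combinatorial interpolation) then produces a minimizing sequence $\{\Sigma_t^k\}$ and a time $t_k$ such that $\area(\Sigma_{t_k}^k) \to W$ and $\Sigma_{t_k}^k$ converges, as varifolds, to a stationary integral varifold $V$ whose support is a smooth embedded minimal surface $\Sigma$ (possibly with integer multiplicities) of total area $W$.

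The main obstacle, and the heart of Pitts--Rubinstein, is controlling the topology of $\Sigma$ and showing that $\Sigma$ (or its appropriate double cover with a tube) is actually isotopic to $H$ rather than to some simpler surface obtained by compressing $H$. This is where strong irreducibility enters. The key dichotomy is: given any embedded surface $\Sigma'$ in $M$ isotopic to $H$, either a neck-pinch/compression on one side of $\Sigma'$ can be carried out in the sweep-out while decreasing area, or else the compressions on the two sides obstruct each other --- precisely what strong irreducibility forbids. Running this argument in the min-max setting, one shows that for an almost-minimizing sweep-out the nearby surfaces $\Sigma_{t_k}^k$ cannot develop simultaneous compressions on both sides, so the convergence $\Sigma_{t_k}^k \to \Sigma$ is, up to multiplicity, an isotopy on the Heegaard side. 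By the genus-continuity results for min-max convergence (no loss of genus in the limit for almost-minimizing sequences), $\Sigma$ has genus at most that of $H$, and strong irreducibility rules out strictly smaller genus.

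This analysis leaves exactly two possibilities for the limit $\Sigma$. If $\Sigma$ is orientable and appears with multiplicity one, then $\Sigma$ is a minimal surface isotopic to $H$, which is the first conclusion. If instead $\Sigma$ appears with multiplicity two, the only way to reconcile multiplicity with the orientation-preserving sweep-out is that $\Sigma$ is one-sided: the nearby level sets $\Sigma_{t_k}^k$ are then isotopic to the boundary of a tubular neighborhood of a non-orientable minimal surface $\Sigma_0$, and the small discrepancy in genus is corrected by a thin vertical tube in the $I$-bundle neighborhood of $\Sigma_0$. This gives the second conclusion of the theorem. The hardest step, by a wide margin, is the topological control in this middle paragraph --- showing that min-max limits of almost-minimizing sweep-outs through a strongly irreducible splitting cannot lose genus; the regularity of the limit and the multiplicity-one/two dichotomy are then standard consequences of Schoen--Simon regularity and the classification of two-sided versus one-sided embeddings.
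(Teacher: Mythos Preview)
The paper does not prove this statement at all. Theorem~\ref{conjecture:pr} is stated as a result \emph{announced} by Pitts and Rubinstein and is explicitly taken as an assumption: the paper remarks that ``a full proof of this result has not yet appeared in the literature, though recent progress has been made by Colding and De Lellis, De Lellis and Pellandrini, and Ketover,'' and the right-hand inequality of the main theorem is stated as conditional on it. So there is no ``paper's own proof'' to compare your proposal against.

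That said, your outline is an accurate summary of the min-max strategy that Pitts--Rubinstein announced and that the subsequent papers cited above have been filling in: Simon--Smith/Colding--De Lellis for existence and regularity of the min-max limit, De Lellis--Pellandini for genus bounds, and Ketover for the genus-preservation and multiplicity analysis using strong irreducibility. As a sketch of the intended argument it is essentially correct in shape. But you should be aware that the step you flag as ``the hardest by a wide margin'' --- ruling out genus loss and controlling the multiplicity/one-sided case under strong irreducibility --- is precisely the part that was not fully available when this paper was written, and your paragraph does not actually supply the missing argument; it only names it. In a write-up you should either cite the relevant later work (Ketover, and the subsequent Ketover--Liokumovich--Song completion) for that step, or make clear, as the present paper does, that the result is being assumed.
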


A full proof of this result has not yet appeared in the literature,
though recent progress has been made by Colding and De Lellis
\cite{cdl}, De Lellis and Pellandrini \cite{dlp}, and Ketover
\cite{ketover}.

We shall show:

\begin{theorem} \label{theorem:morse} %
There is a constant $K > 0$, such that for any closed hyperbolic
$3$-manifold,
\begin{equation} 
K ( \text{linear width}(M) ) \le \text{Morse area}(M) \le 4 \pi
(\text{linear width}(M)), 
\end{equation}
where the right hand bound holds assuming Theorem
\ref{conjecture:pr}.
\end{theorem}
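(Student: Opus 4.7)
To prove $\area(f) \le 4\pi \cdot \text{linear width}(M)$, let $w = \text{linear width}(M)$ and fix a linear splitting $M = C_1 \cup \cdots \cup C_{2n}$ realizing $w$. By Scharlemann--Thompson untelescoping we may further assume, without increasing $w$, that the Heegaard splitting of each sub-region between consecutive thin surfaces $\partial_- C_{2i}$ is strongly irreducible. Theorem \ref{conjecture:pr} then isotopes each thick surface $H_i$ to a minimal surface $\Sigma_i$ in $M$ (the tubed non-orientable case adds negligible area and is handled analogously). In $\mathbb{H}^3$, a minimal surface has intrinsic Gauss curvature $K_{\text{int}} = -1 - k_1^2 \le -1$, so Gauss--Bonnet gives
\begin{equation*}
\area(\Sigma_i) \le -2\pi \chi(\Sigma_i) = 4\pi(g(\Sigma_i) - 1) \le 4\pi w.
\end{equation*}
The Pitts--Rubinstein min-max construction also produces, within each sub-region, a sweep-out of $H_i$ whose maximum slice area is arbitrarily close to $\area(\Sigma_i)$. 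Concatenating these sweep-outs along the thin surfaces yields, for every $\varepsilon > 0$, a Morse function on $M$ with every level set of area at most $4\pi w + \varepsilon$, which suffices since Morse area is defined as an infimum.

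\textbf{Lower bound.} Given any Morse function $f \colon M \to \R$ with $\area(f) = A$, its critical points determine a handle decomposition, which we assemble into a linear splitting: pair the $0$- and $1$-handles into the odd compression bodies $C_{2i-1}$, pair the $2$- and $3$-handles into the even ones $C_{2i}$, and place the thin surfaces $\partial_- C_{2i}$ at level sets of $f$ achieving local genus minima of $F_t = f^{-1}(t)$. The width of this splitting, and hence the linear width of $M$, is bounded above by the maximum genus of its thick level sets, each of which is a regular level $f^{-1}(t_i)$ of area at most $A$. The problem thus reduces to a universal geometric estimate
\begin{equation*}
g(F_{t_i}) \le C \cdot \area(F_{t_i})
\end{equation*}
for thick levels in a closed hyperbolic 3-manifold, after which $K = 1/C$ gives the claim.

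\textbf{Main obstacle.} The principal difficulty is establishing this area-to-genus bound for an embedded but non-minimal surface. A direct Gauss--Bonnet argument fails: when $F_{t_i}$ is not minimal, the extrinsic curvature integral $\int K_{\text{ext}} \, dA$ has no a priori sign, and absent additional constraints a surface can have small area and arbitrarily large genus. The approach is to exploit the Margulis thick-thin decomposition of $M$: on the thick part, where $\inj(M) \ge \epsilon_0$ for the Margulis constant, a packing argument bounds the number of topological handles of $F_{t_i}$ per unit area; in a Margulis tube around a short closed geodesic, embeddedness combined with the solid-torus geometry restricts how much genus a component of $F_{t_i} \cap (\text{tube})$ can contribute. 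Glueing these local estimates along the tube boundaries, and using the freedom to replace the Morse function $f$ locally without increasing its area, should produce the required universal linear bound with some $K > 0$ independent of $M$.
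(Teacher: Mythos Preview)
Your upper bound argument has a genuine gap at exactly the point the paper flags as the main difficulty. Pitts--Rubinstein min-max produces a \emph{sweepout} of bounded area, but a sweepout is not a foliation: the slices may overlap, fail to be level sets of any function, and in particular need not assemble into a Morse function. The paper explicitly says it does not know how to pass directly from a bounded-area sweepout to a bounded-area foliation. The actual argument (due to Colding--Gabai) is different: once the Heegaard surface is isotoped to an unstable minimal surface $\Sigma$, one takes a mean-convex product foliation of a neighborhood of $\Sigma$ and then runs the Colding--Minicozzi mean curvature flow from each boundary leaf. This flow is area-decreasing and, by their regularity theory, has only standard singularities (disc compressions, sphere collapses, torus collapses), so it genuinely foliates each compression body by surfaces of area at most $4\pi g$. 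Finiteness of the process uses the Anderson--White bound on the number of minimal surfaces of bounded genus. Your sentence ``concatenating these sweep-outs \ldots\ yields a Morse function'' is precisely the step that does not follow.

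For the lower bound, your reduction to ``$g(F_{t_i}) \le C \cdot \area(F_{t_i})$ for thick level sets'' is not what the paper proves, and as stated it is false: even in a region of injectivity radius bounded below, an embedded surface of small area can carry arbitrarily many tiny handles, so no packing argument on $F_t$ itself will bound its genus. The paper never controls the topology of the actual level sets. Instead it builds a \emph{different} linear splitting from an $\epsilon$-regular Voronoi decomposition: each Voronoi cell $V_i$ is assigned a ``cell splitter'' $t_i$ (the time at which $M_t$ fills half of $B(x_i,\epsilon/2)$), and the polyhedral surfaces $S_t = \partial\bigl(\bigcup_{t_i\le t} V_i\bigr)$ play the role of Heegaard surfaces. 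The area of $F_t$ bounds the number of Voronoi \emph{faces} in $S_t \cap \W$ (via the half-volume/least-area-disc argument along the dual edge), and since each deep face has boundedly many edges, this bounds $\text{genus}(S_t\cap\W)$. One then caps off inside the Margulis tubes using the solid-torus structure, and bounds the handles added per Voronoi cell using Kobayashi--Rieck's control on $V_i\cap X_\mu$. Your sketch has the thick--thin decomposition in the right place, but the essential mechanism---replacing $F_t$ by a coarse polyhedral surface whose combinatorics, not whose geometry, is controlled by $\area(F_t)$---is missing.
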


Our methods are effective, and the constant $K$ may be estimated using
a bound on the Margulis constant for $\H^3$, though we omit the
details of this calculation, as our methods seem unlikely to give an
optimal constant.

\subsection{Outline}

In Section \ref{section:metric bounds topology}, we show how to bound
linear width in terms of Morse area. A bound on the Morse area of $M$
gives a Morse function $f \colon M \to \R$, with bounded area level
sets, but with no \emph{a priori} bound on the topological complexity
of the level sets.

We use a Voronoi decomposition of $M$ to give a polyhedral
approximation of the Morse function, which we now describe in a simple
case. Let $\V$ be a Voronoi decomposition of $M$ in which every
Voronoi cell $V_i$ is a topological ball, and has size bounded above
and below, i.e. there is an $\e > 0$ such that $B(x_i, \e/2) \subset
V_i \subset B(x_i, \e)$, where $x_i$ is the center of the Voronoi
cell. Let $M_t$ be the sublevel set of the Morse function, i.e. $M_t =
f^{-1}((-\infty, t])$. The sublevel sets are a monotonically
increasing collection of subsets of $M$, which start off empty, and
eventually contain all of $M$, so in particular, for each Voronoi cell
$V_i$ there is a $t_i$ such that the volume of $M_t \cap B(x_i, \e/2)$
is exactly half the volume of $B(x_i, \e/2)$, and we shall call $t_i$
the \emph{cell splitter} for the Voronoi cell $V_i$.  Furthermore, we
may assume that the $t_i$ are distinct for distinct Voronoi cells.
This gives a linear order to the Voronoi cells, and we wish to show
that constructing the manifold by adding the Voronoi cells in this
order gives a bounded linear width handle decomposition for $M$. Let
$P_t$ be the union of the Voronoi cells whose cell splitters $t_i$ are
at most $t$. Each Voronoi cell is a ball, with a bounded number of
faces, so adding a Voronoi cell corresponds to adding a bounded number
of handles. It remains to show that the boundary of each $P_t$ has
genus bounded in terms of the area of the level set $F_t =
f^{-1}(t)$. Let $V_i$ and $V_j$ be two adjacent Voronoi cells, with
$V_i$ contained in $P_t$ and $V_j$ outside $P_t$, so their common face
is a subset of $\partial P_t$. Consider the sequence of balls $B(x,
\e/2)$, as $x$ runs along the geodesic from $x_i$ to $x_j$. At least
half the volume of $B(x_i, \e/2)$ is contained in $M_t$, and at most
half the volume of $B(x_j, \e/2)$ is contained in $M_t$, so there is
an $x$ such that exactly half the volume of $B(x, \e/2)$ is contained
in $M_t$, and so there is a lower bound on the area of $F_t \cap B(x,
\e/2)$. Therefore, a bound on the area of $F_t$ gives a bound on the
number of faces of $\partial P_t$. As each face has a bounded number
of edges, this gives a bound on the genus of $\partial P_t$, and hence
a bound on the linear width of $M$, though this bound depends on $\e$.

In order to produce a bound which works for any compact hyperbolic
manifold $M$, we use the Margulis Lemma and the thick-thin
decomposition for hyperbolic manifolds. There is constant $\mu$,
called a Margulis constant, such that any compact hyperbolic manifold
may be decomposed into a thick part $X_\mu$, where each point has
injectivity radius greater than $\mu$, and a thin part, where each
point has injectivity radius at most $\mu$, and which is a disjoint
union of solid tori.  If we choose $\e$ sufficiently small, then we
may choose a Voronoi decomposition of the thick part in which each
Voronoi cell has size bounded above and below, and run the argument in
the previous paragraph to control the genus of $\partial P_t$ inside
the thick part. We do not control the complexity of $\partial P_t$ in
the thin part, but as each component of the thin part is a solid
torus, we may cap off $\partial P_t \cap X_\mu$ with surfaces parallel
to $P_t \cap \partial X_\mu$, while still obtaining bounds on the
genus.  In order to bound the number of handles corresponding to
adding a Voronoi cell, we use a result of Kobayashi and Rieck
\cite{kobayashi-rieck} which gives bounds on the topological
complexity of the intersection of a Voronoi cell with the thin part.

The key problem for the upper bound is that the techniques of Pitts
and Rubinstein use sweepouts, so although their minimax construction
produces a sweepout of bounded area, we do not know how to directly
replace a bounded area sweepout with a bounded area
foliation. However, the upper bound is obtained in recent work of
Colding and Gabai \cite{colding-gabai}, using work of Colding and
Minicozzi \cite{colding-minicozzi} on the mean curvature flow, and we
describe their results in Section \ref{section:flow}.

\subsection{Acknowledgements}

The authors would like to thank Dick Canary, David Futer, David Gabai,
Joel Hass, Daniel Ketover, Sadayoshi Kojima, Yair Minksy, Yo'av Rieck
and Dylan Thurston for helpful conversations, and the Tokyo Institute
of Technology for its hospitality and support. The second author was
supported by the Simons Foundation CGM 234477 and PSC CUNY
TRADB-45-17.

\section{Morse area bounds Scharlemann-Thompson width} \label{section:metric
  bounds topology}

In this section we show that we can bound the Scharlemann-Thompson
width of a hyperbolic manifold in terms of its Morse area.  We will
approximate level sets by surfaces which are unions of faces of
Voronoi cells, and we start by describing the properties of the Voronoi
decompositions that we will use.

\subsection{Voronoi cells} \label{section:voronoi}

We will approximate the level sets of $f$ by surfaces consisting of
faces of Voronoi cells.  We now describe in detail the Voronoi cell
decompositions we shall use, and their properties.

A \emph{polygon} in $\H^3$ is a compact convex subset of a hyperbolic
plane whose boundary consists of a finite number of geodesic
segments. A \emph{polyhedron} in $\H^3$ is a convex topological
$3$-ball in $\H^3$ whose boundary consists of a finite collection of
polygons. A \emph{polyhedral cell decomposition} of $\H^3$ is a cell
decomposition in which which every $3$-cell is a polyhedron, each
$2$-cell is a polygon, and the edges are all geodesic segments. We say
a cell decomposition of a hyperbolic manifold $M$ is \emph{polyhedral}
if its preimage in the universal cover gives a polyhedral cell
decomposition of $\H^3$.

Let $X = \{ x_i \}$ be a discrete collection of points in
$3$-dimensional hyperbolic space $\H^3$. The Voronoi cell $V_i$
determined by $x_i \in X$ consists of all points of $M$ which are
closer to $x_i$ than any other $x_j \in X$, i.e.
\[ V_i = \{ x \in \H^3 \mid d(x, x_i) \le d(x, x_j) \text{ for all }
x_j \in X \}. \]
We shall call $x_i$ the \emph{center} of the Voronoi cell $V_i$, and
we shall write $\V = \{ V_i \}$ for the collection of Voronoi cells
determined by $X$. Voronoi cells are convex sets in $\H^3$, and hence
topological balls.  The set of points equidistant from both $x_i$ and
$x_j$ is a totally geodesic hyperbolic plane in $\H^3$.  A \emph{face}
$F$ of the Voronoi decomposition consists of all points which lie in
two distinct Voronoi cells $V_i$ and $V_j$, so $F$ is contained in a
geodesic plane. An \emph{edge} $e$ of the Voronoi decomposition
consists of all points which lie in three distinct Voronoi cells $V_i,
V_j$ and $V_k$, which is a geodesic segment, and a \emph{vertex} $v$
is a point lying in four distinct Voronoi cells $V_i, V_j, V_k$ and
$V_l$. By general position, we may assume that all edges of the
Voronoi decomposition are contained in exactly three distinct faces,
the collection of vertices is a discrete set, and there are no points
which lie in more than four distinct Voronoi cells. We shall call such
a Voronoi decomposition a \emph{regular} Voronoi decomposition, and it
is a polyhedral decomposition of $\H^3$ if every cell is compact.  As
each edge is $3$-valent, and each vertex is $4$-valent, this implies
that the dual cell structure is a simplicial triangulation of $\H^3$,
which we shall refer to as the \emph{dual triangulation}. The dual
triangulation may be realised in $\H^3$ by choosing the vertices to be
the centers $x_i$ of the Voronoi cells and the edges to be geodesic
segments connecting the vertices, and we shall always assume that we
have done this. In this case the triangles and tetrahedra are geodesic
triangles and geodesic tetrahedra in $\H^3$.

Given a collection of points $X = \{ x_i \}$ in a hyperbolic
$3$-manifold $M$, let $\Xt$ be the pre-image of $X$ in the universal
cover of $M$, which is isometric to $\H^3$. We say a subset of $\H^3$
is \emph{equivariant} if it is preserved by the covering translations
determined by the quotient $M$. As $\Xt$ is equivariant, the
$k$-skeleton of the corresponding Voronoi cell decomposition $\V$ of
$\H^3$ is also equivariant, for $0 \le k \le 3$, as are the
$k$-skeletons of the dual triangulation.

We now show that the interior of each Voronoi cell $V$ is mapped down
homeomorphically by the covering projection.  Suppose $y$ is a point
in the interior of a Voronoi cell $V$ with center $x$, so $d(x, y) <
d(x', y)$, for any other $x' \in X$. Let $g$ be a covering
translation, which is an isometry, so $d(x, y) = d(gx, gy)$. As
covering translations act freely, this implies that $gy$ lies in the
interior of the Voronoi cell corresponding to $gx \not = x$. Therefore
$\text{interior}(V)$ has disjoint translates under the group of
covering translations, and so is mapped down homeomorphically into
$M$, though the covering projection may identify distinct faces of a
Voronoi cell under projection into $M$.

By abuse of notation, we shall refer to the resulting polyhedral
decomposition of $M$ as the Voronoi decomposition $\V$ of $M$. By
general position, we may assume that $\V$ is regular.  The dual
triangulation also projects down to a triangulation of $M$, which we
will also refer to as the dual triangulation, though this
triangulation may no longer be simplicial.

We say a collection $X = \{ x_i \}$ of points in $M$ is
\emph{$\epsilon$-separated} if the distance between any pair of points
is at least $\epsilon$, i.e. $d(x_i, x_j) \ge \epsilon$, for all $i
\not = j$.

\begin{definition}
Let $M$ be a compact hyperbolic $3$-manifold.  We say a Voronoi
decomposition $\V$ is $\e$-regular, if it is regular, and it arises
from a maximal collection of $\e$-separated points.
\end{definition}

We shall write $B(x, r)$ for the closed metric ball of radius $r$
about $x$ in $M$,
\[  B(x, r) = \{ y \in M \mid d(x, y) \le r \},   \]
which need not be a topological ball.  As the cells of an $\e$-regular
Voronoi decompositions are determined by a maximal collection of
$\epsilon$-separated points in $M$, each Voronoi cell is contained in
a metric ball of radius $\e$ about its center. Furthermore, as the
points $x_i$ are distance at least $\e$ apart, each Voronoi cell
contains a metric ball of radius $\e/2$ about its center, i.e.
\[ B(x_i, \epsilon/2 )  \subset V_i \subset B(x_i, \epsilon).     \]

One useful property of $\e$-regular Voronoi decompositions is that the
boundary of any union of Voronoi cells is an embedded surface, in fact
an embedded normal surface in the dual triangulation, as we now
describe.

A \emph{simple arc} in the boundary of a tetrahedron is a properly
embedded arc in a face of the tetrahedron with endpoints in distinct
edges. A \emph{triangle} in a tetrahedron is a properly embedded disc
whose boundary is a union of three simple arcs, and a
\emph{quadrilateral} is a properly embedded disc whose boundary is the
union of four simple arcs. A \emph{normal surface} in a triangulated
$3$-manifold is a surface that intersects each tetrahedron in a union
of normal triangles and quadrilaterals.

\begin{proposition}
Let $M$ be a compact hyperbolic manifold, and let $\V$ be an
$\e$-regular Voronoi decomposition. Let $P$ be a union of Voronoi
cells in $\V$, and let $S$ be the boundary of $P$. Then $S$ is an
embedded surface in $M$.
\end{proposition}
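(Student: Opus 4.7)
The plan is to lift to the universal cover $\mathbb{H}^3$, reduce to a local combinatorial check, and descend via the deck group action. Let $\widetilde{\V}$ denote the equivariant Voronoi decomposition of $\mathbb{H}^3$ projecting to $\V$, let $\widetilde{P} = \pi^{-1}(P)$ (a union of cells of $\widetilde{\V}$), and let $\widetilde{S} = \partial \widetilde{P}$. Since the deck group $G \cong \pi_1(M)$ acts on $\mathbb{H}^3$ freely and properly discontinuously and preserves $\widetilde{S}$ setwise, it suffices to prove that $\widetilde{S}$ is an embedded $2$-manifold in $\mathbb{H}^3$. Then $\widetilde{S}/G$ is a $2$-manifold which embeds into $M = \mathbb{H}^3/G$ as the image $\pi(\widetilde{S}) = \partial P = S$.

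To verify $\widetilde{S}$ is a manifold, interior points of $2$-faces are immediate. By regularity, each edge $\widetilde{e}$ of $\widetilde{\V}$ is shared by exactly three cells $\widetilde{V}_1, \widetilde{V}_2, \widetilde{V}_3$ and by the three faces $\widetilde{F}_{ij}$ separating them in pairs; such a face $\widetilde{F}_{ij}$ lies in $\widetilde{S}$ if and only if exactly one of $\widetilde{V}_i, \widetilde{V}_j$ lies in $\widetilde{P}$. Letting $k$ denote the number of the $\widetilde{V}_i$ lying in $\widetilde{P}$, the cases $k \in \{0,3\}$ give no faces of $\widetilde{S}$ through $\widetilde{e}$ (so $\widetilde{e} \not\in \widetilde{S}$), while $k \in \{1,2\}$ give exactly two such faces, and these assemble across $\widetilde{e}$ into a local disk.

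A similar analysis handles vertices. By regularity, a vertex $\widetilde{v}$ lies in exactly four cells $\widetilde{V}_1, \widetilde{V}_2, \widetilde{V}_3, \widetilde{V}_4$, so there are six candidate faces $\widetilde{F}_{ij}$ and four edges through $\widetilde{v}$. Writing $k$ again for the number of $\widetilde{V}_i$ in $\widetilde{P}$, the cases $k \in \{0,4\}$ give no faces of $\widetilde{S}$ through $\widetilde{v}$; $k \in \{1,3\}$ give three faces with triangular link; and $k = 2$ gives four faces with square link. In each case the link of $\widetilde{v}$ in $\widetilde{S}$ is either empty or a circle, so $\widetilde{S}$ is locally a disk at $\widetilde{v}$.

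The main obstacle is the vertex case $k = 2$: one must check that the four faces close up into a $4$-cycle rather than branching. If $\widetilde{V}_1, \widetilde{V}_2 \in \widetilde{P}$ and $\widetilde{V}_3, \widetilde{V}_4 \notin \widetilde{P}$, the faces in $\widetilde{S}$ through $\widetilde{v}$ are $\widetilde{F}_{13}, \widetilde{F}_{14}, \widetilde{F}_{23}, \widetilde{F}_{24}$, and two of them meet in an edge through $\widetilde{v}$ if and only if their index sets share exactly one element; this gives the $4$-cycle $\widetilde{F}_{13} - \widetilde{F}_{23} - \widetilde{F}_{24} - \widetilde{F}_{14} - \widetilde{F}_{13}$ with no branching. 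Granting this local analysis, descending via the $G$-action yields the embedded surface $S$ in $M$.
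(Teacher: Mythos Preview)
Your argument is correct. The paper's proof is the dual formulation of the same combinatorial check: instead of analyzing links at faces, edges, and vertices of the Voronoi decomposition, it passes to the dual triangulation (Voronoi centers become vertices, Voronoi vertices become tetrahedra) and observes that $S$ is a \emph{normal surface} there, meeting each tetrahedron in at most one normal triangle (your cases $k \in \{1,3\}$) or one normal quadrilateral (your case $k=2$). Your vertex link analysis is exactly this normal-disc count, read on the Voronoi side rather than the dual side. The paper's phrasing via normal surfaces is more compact and records the additional fact that $S$ is normal in the dual triangulation, which is referred to later; your version is more self-contained, and the explicit lift to $\mathbb{H}^3$ followed by descent under the deck group handles cleanly the possibility (noted earlier in the paper) that the covering projection may identify distinct faces of a single Voronoi cell.
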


\begin{proof}
The collection of Voronoi cells $P$ intersects a tetrahedron $T$ in
the dual triangulation in a regular neighborhood of the vertices of
$T$. If a tetrahedron $T$ has one or three vertices corresponding to
Voronoi cells in $P$, then $S$ intersects $T$ in a single normal
triangle. If $T$ has exactly two vertices corresponding to Voronoi
cells in $P$, then $S$ intersects $T$ in a single normal
quadrilateral. Therefore $S$ consists of at most one triangle or
quadrilateral in each tetrahedron, and so is an embedded normal
surface.
\end{proof}

We shall write $\inj_M(x)$ for the injectivity radius of $M$ at $x$,
i.e. the radius of the largest embedded ball in $M$ centered at
$x$. We shall write $\inj(M)$ for the injectivity radius of $M$, which
is defined to be
\[ \inj(M) = \inf_{x \in M} \inj_M(x). \]

We shall say a Voronoi cell $V_i$ with center $x_i$ is a \emph{deep}
Voronoi cell if the injectivity radius at $x_i$ is at least $4\e$,
i.e. $\inj_M(x_i) \ge 4\e$, and in particular this implies that the
metric ball $B(x_i, 3\e)$ is a topological ball. We shall also call
centers, faces, edges and vertices of deep Voronoi cells deep.
We shall write $\W$ for the subset of $\V$ consisting of deep Voronoi
cells.

We now show that there are bounds, which only depend on $\epsilon$, on
the number of faces of a deep Voronoi cell, and the number of edges
and faces of a deep Voronoi cell.

\begin{proposition} \label{prop:bound} %
Let $M$ be a compact hyperbolic $3$-manifold with an $\e$-regular
Voronoi decomposition $\V$, and let $\W$ be the collection of deep
Voronoi cells.  Then there is a number $J$, which only depends on
$\e$, such that each deep Voronoi cell $W_i \in \W$ has at most $J$
faces, edges and vertices.
\end{proposition}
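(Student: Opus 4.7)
The plan is to lift the deep Voronoi cell to the universal cover, where the distance estimates behave as in $\H^3$, and then use a volume-comparison argument to bound the number of neighboring cells.

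First I would fix a deep Voronoi cell $W_i$ with center $x_i$, and observe that because $\inj_M(x_i) \ge 4\e$, the ball $B(x_i, 3\e)$ is embedded and isometric to a hyperbolic ball in $\H^3$. Lifting $x_i$ to a point $\widetilde{x}_i$ in the universal cover, any other center $x_j \in X$ whose Voronoi cell $V_j$ shares a face with $W_i$ has a representative lift $\widetilde{x}_j$ in $B(\widetilde{x}_i, 2\e)$: indeed, $W_i \subset B(x_i,\e)$, so a shared face $F \subset W_i \cap V_j$ consists of points equidistant from $x_i$ and $x_j$, each within $\e$ of $x_i$, hence within $\e$ of (a nearest lift of) $x_j$, giving $d(\widetilde{x}_i, \widetilde{x}_j) \le 2\e$.

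Next, since $\{x_i\}$ is $\e$-separated, the lifts of any two distinct neighboring centers are at distance at least $\e$ (the center $\widetilde{x}_i$ has no other lift within $4\e$ of itself by the injectivity radius bound, so this is genuinely a separation among \emph{distinct} points). Therefore the open balls $B(\widetilde{x}_j, \e/2)$ are pairwise disjoint, and each is contained in $B(\widetilde{x}_i, 5\e/2) \subset B(\widetilde{x}_i, 3\e)$. Comparing hyperbolic volumes yields
\[
\#\{\text{faces of } W_i\} \;\le\; \frac{\vol_{\H^3}(B(0, 5\e/2))}{\vol_{\H^3}(B(0, \e/2))},
\]
which is a bound depending only on $\e$.

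Finally, to pass from faces to edges and vertices, I would use the fact that $W_i$ is a compact convex polyhedron, hence combinatorially a convex $3$-polytope. If $F$, $E$, $V$ denote the number of faces, edges and vertices of $W_i$, then Euler's formula $V - E + F = 2$ together with the inequality $2E \ge 3F$ (every face has at least three edges) gives $E \le 3F - 6$ and $V \le 2F - 4$. Taking $J$ to be the maximum of these three quantities from the volume bound above completes the proof. The only substantive point is the geometric step that forces neighboring centers into a ball of radius $2\e$; the rest is volume comparison in $\H^3$ and elementary polytope combinatorics, so there is no serious obstacle beyond correctly invoking the deep-cell hypothesis.
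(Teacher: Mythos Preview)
Your argument is correct and follows essentially the same route as the paper: bound the number of faces of $W_i$ by a volume-packing count inside $B(\widetilde{x}_i, 5\e/2)$, then pass to edges and vertices combinatorially. One small slip: to derive $E \le 3F - 6$ from Euler's formula you need the inequality $2E \ge 3V$ (each vertex of a convex $3$-polytope lies on at least three edges), not $2E \ge 3F$ as you wrote; both inequalities hold here, so your conclusions $E \le 3F-6$ and $V \le 2F-4$ are valid once the correct one is invoked. The paper's combinatorial step is cruder---it notes that each face has at most $J_1$ edges (since every edge of a given face lies in a distinct second face of the same cell), giving $E \le J_1^2$ and hence $J = J_1^2 + 2$---so your linear bound in $J_1$ is in fact sharper, though either suffices for the proposition.
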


\begin{proof}
Let $W$ be a Voronoi cell with center $x$, and with faces $F_1, \ldots
, F_n$. Let $x_i$ be the center of the Voronoi cell $W_i$ adjacent to
the face $F_i$. As $W$ is interior, $W_i$ also lies in $\W$.

If two Voronoi cells share a common face, then the distance between
their centers is at most $2 \e$.  Therefore all of the centers of the
Voronoi cells corresponding to the faces of $V$ are contained in the
metric ball $B(x, 2\e)$. This implies that the the balls of radius
$\e/2$ around the $x_i$ are contained in the metric ball $B(x,
5\e/2)$. As the $B(x_i, \epsilon/2)$ are all disjoint, this implies
that the number of faces is at most
\[ J_1 = \frac{ \vol_{\mathbb{H}^3}( B(x, 5 \e/2) ) }{
  \vol_{\mathbb{H}^3}( B(x, \e / 2) ) }. \]
%
%
Note that $J_1$ is also an upper bound for the maximum number of edges
in any face of a Voronoi cell, because every edge of that face is
contained in another face in that cell. So the total number of edges
is at most $J_1^2$, and by the formula for Euler characteristic, the
number of vertices is at most $J_1^2 + C$. Therefore we may choose $J$
to be $J_1^2 + 2$.
\end{proof}

An similar volume bound argument to the one above proves the
following:

\begin{proposition} \label{prop:bound2} %
Let $M$ be a compact hyperbolic $3$-manifold with an $\e$-regular
Voronoi decomposition $\V$. Then there is a number $L$, which depends
only on $\e$, such that for any deep Voronoi center $x_i$, the number
of Voronoi centers contained in $B(x_i, 3\e)$ is at most $L$.
\end{proposition}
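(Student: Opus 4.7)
The plan is to imitate the volume-packing argument used in Proposition \ref{prop:bound}. Since $x_i$ is deep, by definition $\inj_M(x_i) \ge 4\e$, so the metric ball $B(x_i, 4\e)$ lifts isometrically to a round hyperbolic ball in $\H^3$ of radius $4\e$; in particular volumes of subsets of $B(x_i, 4\e)$ agree with their hyperbolic-space counterparts.

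First I would observe that any Voronoi center $x_j \in B(x_i, 3\e)$ satisfies $B(x_j, \e/2) \subset B(x_i, 7\e/2) \subset B(x_i, 4\e)$ by the triangle inequality, so each such half-radius ball sits inside the embedded chart around $x_i$. Next, $\e$-separation of the Voronoi centers (together with the maximality built into the definition of an $\e$-regular decomposition) implies that the balls $B(x_j, \e/2)$ are pairwise disjoint; here disjointness is automatic in $M$, and lifts to disjointness in the embedded chart because the lifts are the unique preimages inside $B(x_i, 4\e)$.

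Then I would sum volumes: if $N$ is the number of Voronoi centers in $B(x_i, 3\e)$, then
\[ N \cdot \vol_{\H^3}(B(\cdot, \e/2)) = \sum_{x_j \in B(x_i,3\e)} \vol(B(x_j, \e/2)) \le \vol(B(x_i, 7\e/2)) = \vol_{\H^3}(B(\cdot, 7\e/2)), \]
so
\[ L = \frac{\vol_{\H^3}(B(\cdot, 7\e/2))}{\vol_{\H^3}(B(\cdot, \e/2))} \]
is the desired bound, depending only on $\e$.

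There is no real obstacle here; the only point that requires any care is justifying that the packing argument can be carried out honestly in $\H^3$ rather than in $M$, which is exactly what the deepness hypothesis $\inj_M(x_i) \ge 4\e$ is designed to provide, since $7\e/2 < 4\e$ ensures every relevant ball lies inside the embedded chart around $x_i$.
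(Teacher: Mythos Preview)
Your argument is correct and is exactly the volume-packing argument the paper has in mind; the paper itself omits the proof, saying only that ``a similar volume bound argument to the one above'' (i.e., to Proposition~\ref{prop:bound}) establishes the result. Your choice of enclosing radius $7\e/2 < 4\e$ is the natural adaptation of the $5\e/2$ used there, and your remark that deepness is precisely what justifies carrying out the packing count in $\H^3$ is the key observation.
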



\subsection{Polyhedral surfaces} \label{section:linear}

We may choose a Morse function $f \colon M \to \R$, such that the
complexity of $f$ is within some small $\delta > 0$ of the infimum,
i.e.
\[ \text{area}(F_t) \le \text{Morse area}(M) + \delta ,  \]
for all $t \in \R$. We now describe how to use the Morse function $f$
to give a linear ordering to the Voronoi cells in $\V$.

\begin{definition}
Let $M$ be a compact hyperbolic $3$-manifold, and let $f \colon M
\rightarrow \R$ be a Morse function.  Given $t \in \R$ define the {\it
  sublevel set of $M$ at $t$}, which we shall denote $M_t$, to be the
subset of $M$ consisting of the union of all level sets $F_t$ with $t
\in ( -\infty, t]$, i.e.
\[ M_t = f^{-1}( (- \infty, t ]   ).   \]
\end{definition}

For $t$ sufficiently small, $M_t$ is the empty set, and for $t$
sufficiently large $M_t$ is equal to all of $M$.  The region $M_t$
varies continuously in $t$, and is monotonically increasing in $t$.

\begin{definition} \label{definition:splitter} %
Let $M$ be a compact hyperbolic $3$-manifold with an $\e$-regular
Voronoi decomposition $\V$.  Let $f \colon M \to \R$ be a Morse
function.  For each Voronoi cell $V_i$ with center $x_i$ there is a
unique $t_i \in \R$ such that the surface $F_{t_i}$ divides the metric
ball $B(x_i, \epsilon/2)$ exactly in half by volume, i.e. $\vol(M_t
\cap B(x_i, \e/2)) = \tfrac{1}{2} \vol(B(x_i, \e/2))$.  We call this
$t_i$ the {\it cell splitter} of $V_i$.
\end{definition}

\begin{definition} \label{definition:generic} %
We say that a Morse function $f \colon M \to \R$ is \emph{generic}
with respect to a Voronoi decomposition $\mathcal{V}$ if the cell
splitters for distinct Voronoi cells $V_i$ correspond to distinct
points $t_i \in \R$, and no cell splitter is also a critical point for
the Morse function. We say a point $t \in \R$ is \emph{generic} if it
is not a critical point for the Morse function, and is not a cell
splitter.
\end{definition}

We may assume that $f$ is generic by an arbitrarily small perturbation
of $f$, and we shall always assume that $f$ is generic from now on. 

\begin{definition}
Let $M$ be a compact hyperbolic $3$-manifold with an $\e$-regular
Voronoi decomposition $\V$, and let $f \colon M \rightarrow \R$ be a
generic Morse function.  Let $\V$ be the Voronoi decomposition ordered
by the order inherited from the cell splitters $t_i$.  Given $t \in
\R$, Let $M_t$ be the sublevel set of $M$ at $t$.  We define $P_t$,
the \emph{polyhedral approximation to $M_t$}, to be the union of the
Voronoi cells $V_i$ with $t_i \le t$, and call $S_t = \partial P_t$
the \emph{polyhedral surface} determined by $t \in \R$.
\end{definition}

The polyhedral surface $S_t$ is a union of faces of the Voronoi cells,
and so is a normal surface in the dual triangulation.  We shall write
$\Norm{S_t}$ for the number of Voronoi faces the polyhedral surface
$S_t$ contains. We shall write $\Norm{S_t \cap \W}$ for the number of
faces in the polyhedral surface $S_t \cap \W$, which may have
boundary.  A schematic picture of a polyhedral surface is given below
in Figure \ref{pic:Pt}.

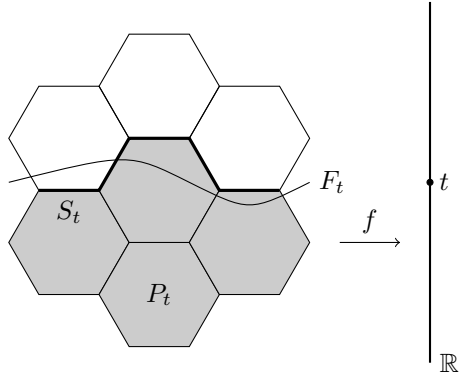
\begin{figure}[H]
\begin{center}
\begin{tikzpicture}[scale=0.4]

\def\hexagon{
\draw +(60:2) -- +(120:2) -- +(180:2) -- +(240:2) --
+(300:2) -- +(360:2) -- cycle;
}

\def\fillhexagon{
\filldraw [fill=black!20] +(60:2) -- +(120:2) -- +(180:2) -- +(240:2) --
+(300:2) -- +(360:2) -- cycle;
}

\fillhexagon

\begin{scope}[xshift=+0cm, yshift=+3.464cm]
    \hexagon
\end{scope}

\begin{scope}[xshift=+3cm, yshift=-1.732cm]
    \fillhexagon
\end{scope}

\begin{scope}[xshift=+3cm, yshift=+1.732cm]
    \fillhexagon
\end{scope}

\begin{scope}[xshift=+3cm, yshift=+5.196cm]
    \hexagon
\end{scope}

\begin{scope}[xshift=+6cm, yshift=0cm]
    \fillhexagon
\end{scope}

\begin{scope}[xshift=+6cm, yshift=3.464cm]
    \hexagon
\end{scope}

\draw [thick] (12,-4) node [right] {$\R$} -- (12,8);

\draw [->] (9, 0) -- (11,0) node [midway, above] {$f$};

\draw (-2, 2) .. controls (2,3) .. (4, 2)
              .. controls (6, 1) .. (8, 2) node [right] {$F_t$};

\filldraw (12, 2) circle (0.1cm) node [right] {$t$};

\draw [very thick] (-1,1.732) -- node [midway, below] {$S_t$}
++(0:2) -- ++(60:2) -- ++(0:2) -- ++(-60:2) -- ++(0:2);

\draw (3,-1.732) node {$P_t$};

\end{tikzpicture}%
\end{center} \caption{A polyhedral surface $S_t$ determined by a level
  set $F_t$.}  \label{pic:Pt}
\end{figure}

In this section, we will show the following bound on the complexity of
the polyhedral surface in the deep part $\W$.

\begin{proposition}\label{cor:bound} 
Let $M$ be a compact hyperbolic $3$-manifold, with an $\e$-regular
Voronoi decomposition $\V$, deep part $\W$, and a generic Morse
function $f \colon M \to \R$.  For $t \in \R$, let $S_t$ be the
polyhedral surface associated to $t$.  Then there is a constant $K$,
which only depends on $\e$, such that
\[ \norm{\partial ( S_t \cap \W ) } \le K \text{area}(F_t), \]
and
\[ \text{genus}(S_t \cap \W) \le K \text{area}(F_t). \]
\end{proposition}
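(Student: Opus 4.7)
The plan is first to bound the number of Voronoi faces of $S_t \cap \W$ linearly in $\area(F_t)$, and then to convert this into the stated bounds on boundary components and on the genus using Proposition \ref{prop:bound} and Euler's formula.

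For each face $F$ of $S_t \cap \W$, with adjacent cells $V_i$ (where $t_i \le t$) and $V_j$ (where $t_j > t$), I would produce a witness ball as follows. Since $F$ is a face of some deep Voronoi cell we may take $V_i \in \W$, so $\inj_M(x_i) \ge 4\e$, hence $B(x_i, 3\e)$ is embedded and contains every ball $B(x, \e/2)$ centered on the geodesic from $x_i$ to $x_j$. Setting
\[
\phi(x) = \vol(M_t \cap B(x, \e/2)) - \tfrac{1}{2} \vol(B(x, \e/2)),
\]
the definition of the cell splitter together with monotonicity of $M_t$ in $t$ yields $\phi(x_i) \ge 0$ and $\phi(x_j) \le 0$. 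The intermediate value theorem produces $x_F$ on the geodesic with $\phi(x_F) = 0$, so $F_t$ bisects $B(x_F, \e/2)$ by volume. A standard isoperimetric estimate in $\H^3$ on the scale $\e/2$ then gives a constant $c_0 = c_0(\e) > 0$ with
\[
\area(F_t \cap B(x_F, \e/2)) \ge c_0.
\]

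Next, I would show that the family $\{B(x_F, \e/2)\}$, indexed by the deep faces of $S_t$, has multiplicity bounded by a constant $N(\e)$. If $B(x_F, \e/2)$ meets $B(x_{F'}, \e/2)$, the triangle inequality places every Voronoi center associated with $F'$ within distance $5\e$ of every center associated with $F$. A volume-packing argument in the universal cover, of the same flavor as Proposition \ref{prop:bound2} but carried out in $\H^3$ (since $B(x_i, 5\e)$ need not embed), bounds the number of such centers by some $L'(\e)$, and each is adjacent to at most $J$ faces by Proposition \ref{prop:bound}; this gives the multiplicity bound $N(\e) = J L'(\e)$. Summing the pointwise area bound over the deep faces yields
\[
c_0 \cdot \#\{\text{deep faces of } S_t\} \le \sum_F \area(F_t \cap B(x_F, \e/2)) \le N(\e)\, \area(F_t),
\]
bounding the face count linearly in $\area(F_t)$.

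Finally, each face of $S_t \cap \W$ has at most $J$ edges by Proposition \ref{prop:bound}, so the number of boundary edges, and hence the number $\norm{\partial(S_t \cap \W)}$ of boundary circles, is linear in the face count. Bounding the vertex and edge counts of $S_t \cap \W$ by $J$ times the face count and then using $\chi = V - E + F = 2 - 2g - b$ extracts the genus bound. I expect the main obstacle to be the multiplicity estimate: one must apply the universal-cover lifting carefully, since deepness guarantees embeddedness only of $B(x_i, 3\e)$ while the argument requires counting centers in a ball of radius $5\e$. The isoperimetric statement underlying the witness ball step is classical, and on the small scale $\e/2$ a Euclidean comparison provides a concrete positive $c_0$.
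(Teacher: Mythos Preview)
Your proposal is correct and follows essentially the same two-step structure as the paper: Proposition~\ref{bounded_faces} bounds $\Norm{S_t \cap \W}$ linearly in $\area(F_t)$ via exactly your witness-ball argument along the dual edge, and Proposition~\ref{bounded_genus} then converts this face count into the genus and boundary bounds using Proposition~\ref{prop:bound} and the Euler characteristic. The one place where the paper handles things more cleanly is precisely the multiplicity estimate you flagged as the main obstacle: instead of your $5\e$ bound (which forces you into the universal cover), the paper observes that if $d(s_i,s_j)\le \e$ then choosing for each dual edge its \emph{nearer} endpoint yields two Voronoi centers at distance at most $\e+\e+\e=3\e$, so Proposition~\ref{prop:bound2} applies directly inside the embedded ball $B(x_i,3\e)$ guaranteed by deepness, and no lifting is needed.
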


In particular, this bounds the genus of $S_t \cap \W$ as constant
times the Morse width of $M$, where the constant depends only on $\e$.
We start by showing that the area the level sets bounds the number of
faces of the polyhedral surface in the deep part $\W$.

\begin{proposition} \label{bounded_faces} %
Let $M$ be a compact hyperbolic $3$-manifold, with an $\e$-regular
Voronoi decomposition $\V$, deep part $\W$, and a generic Morse
function $f \colon M \to \R$.  For $t \in \R$, let $S_t$ be the
polyhedral surface associated to $t$.  Then there is a constant $K$,
which only depends on $\e$, such that
\[ \Norm{S_t \cap \W} \le K \text{area}(F_t). \]
\end{proposition}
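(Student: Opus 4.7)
The plan is to associate to each face $F$ of $S_t \cap \W$ a small metric ball $B_F$ in $M$ inside which $F_t$ contributes a definite amount of area, and then show that the balls $\{B_F\}$ overlap with bounded multiplicity. These two estimates will combine to give
\[
\Norm{S_t \cap \W} \cdot A(\e) \;\le\; \sum_{F} \area(F_t \cap B_F) \;\le\; C(\e) \cdot \area(F_t),
\]
with positive constants $A(\e), C(\e)$ depending only on $\e$, yielding the proposition with $K = C(\e)/A(\e)$.

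To construct $B_F$, recall that $F$ is the common face of two adjacent Voronoi cells $V_i \in P_t$ and $V_j \notin P_t$, at least one of which (say $V_i$) is deep. By definition of the cell splitters $t_i \le t < t_j$ and monotonicity of the sublevel sets, $\vol(M_t \cap B(x_i, \e/2)) \ge \half \vol B(x_i, \e/2)$ while $\vol(M_t \cap B(x_j, \e/2)) < \half \vol B(x_j, \e/2)$. Since the function $x \mapsto \vol(M_t \cap B(x, \e/2))$ varies continuously along the geodesic from $x_i$ to $x_j$ (of length at most $2\e$), the intermediate value theorem produces a point $x_F^*$ on this geodesic with $\vol(M_t \cap B(x_F^*, \e/2)) = \half \vol B(x_F^*, \e/2)$. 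Set $B_F = B(x_F^*, \e/2)$. Because $V_i$ is deep we have $\inj_M(x_i) \ge 4\e$, and since $d(x_i, x_F^*) \le 2\e$, the ball $B_F \subset B(x_i, 5\e/2)$ is embedded in $M$ and isometric to a hyperbolic ball. The region $M_t \cap B_F$ has volume exactly half that of $B_F$ and is bounded inside $B_F$ by $F_t$, so the relative isoperimetric inequality in a hyperbolic ball of radius $\e/2$ yields the lower bound $\area(F_t \cap B_F) \ge A(\e)$.

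For the multiplicity bound, any $y \in F_t \cap B_F$ satisfies $d(y, x_i), d(y, x_j) \le \e/2 + 2\e = 5\e/2$. Lifting $y$ to $\tilde y \in \H^3$, every Voronoi center of $M$ within $5\e/2$ of $y$ has a lift in $B(\tilde y, 5\e/2)$; the pairwise $\e$-separation of Voronoi centers together with a ball-packing argument (essentially that of Proposition \ref{prop:bound2}) bounds the number of such lifts by a constant $L'(\e)$. The number of faces $F$ with $y \in B_F$ is therefore at most $L'(\e)^2 = C(\e)$.

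I expect the main technical point to be ensuring embeddedness of the balls $B_F$: the partner cell $V_j$ need not itself be deep, so one cannot invoke an embedded neighborhood of $x_j$ directly. However, adjacency of $V_j$ to the deep cell $V_i$ forces $B_F$ to sit inside the embedded ball $B(x_i, 4\e)$, and this is all that the isoperimetric inequality and the volume packing argument require. The constants $A(\e)$ and $C(\e)$ are both positive and depend only on $\e$, as they come from comparisons of hyperbolic ball volumes and from the standard isoperimetric constant on a ball of bounded radius.
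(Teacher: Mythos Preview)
Your proof is correct and follows essentially the same strategy as the paper: for each deep face locate a point on the dual edge where the $\e/2$-ball is bisected by $M_t$, invoke an isoperimetric lower bound on $\area(F_t)$ in that ball, and control the overlap of these balls by a packing argument on Voronoi centers. The only differences are cosmetic---you bound the pointwise covering multiplicity of $\{B_F\}$ and sum areas, whereas the paper greedily extracts $N/L$ pairwise disjoint balls---and you are actually a bit more careful than the paper in noting that only one of the two cells adjacent to $F$ need be deep, which already forces $B_F\subset B(x_i,4\e)$ to be embedded.
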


\begin{proof}
Let $P_t$ be the polyhedral approximation to $M_t$.  Let $C$ be a face
of $S_t \cap \W$, and let $W_i$ and $W_j$ be the two adjacent Voronoi
cells in $\V$. Up to relabeling we may assume that $W_i$ is
contained in $P_t$, and $W_j$ is not.  Let $\gamma$ be a geodesic
connecting $x_i$ to $x_j$, and consider $B(s, \epsilon/2)$, for $s \in
\gamma$. As $W_i$ and $W_j$ are deep, the metric balls $B(x_i,
\epsilon/2)$, $B(x_j, \epsilon/2)$ and $B(s, \epsilon/2)$ are all
topological balls, isometric to the ball $B(x,\e/2)$ in
$\mathbb{H}^3$.  At least half of the volume of $B(x_i, \epsilon/2)$
is contained in $P_t$, and strictly less than half of the volume of
$B(x_j, \epsilon/2)$ is contained in $P_t$, so there is some $s \in
\gamma$ such that exactly half the volume of $B(s, \epsilon/2)$ is
contained in $P_t$. There is a constant $A$, depending only on
$\epsilon$, such that any surface dividing a ball in hyperbolic into
regions of equal volume has area at least $A$.  In fact, we may take
$A$ to be the area of the equatorial disc, which is $2 \pi (
\cosh(\epsilon/2) - 1 )$; see for example Bachman, Cooper and White
\cite{bcw}.

Recall that the Voronoi decomposition has a dual triangulation in
which each edge is a geodesic segment, and we shall write $\Gamma$ for
the geodesic graph in $M$ formed by the $1$-skeleton of the dual
triangulation.  We shall write $\Gamma_d$ for the subset of $\Gamma$
consisting of vertices corresponding to deep Voronoi cells, and edges
connecting two deep Voronoi cells, and we shall refer to this as the
\emph{deep graph}.  Each geodesic edge between two deep Voronoi cells
has length strictly less than $2\epsilon$.  Therefore the choice of
geodesic is unique for the Voronoi cells in $\W$, as its length is
smaller than the injectivity radius at each deep Voronoi cell center
$x_i$. By Proposition \ref{prop:bound}, the geodesic dual graph
$\Gamma_d$ has valence at most $J$.

\begin{claim}\label{claim:area} 
Consider a collection of points $\{ s_i \}$ such that each point $s_i$
lies in a distinct edge $\gamma_i$ of the deep graph $\Gamma_d$. Then
any ball $B(s_i, \epsilon/2)$ intersects at most $L$ other balls
$B(s_j, \epsilon/2)$, where $L$ is the constant from Proposition
\ref{prop:bound2}.
\end{claim}

\begin{proof}[Proof of Claim \ref{claim:area}.]
If two balls $B(s_i, \e/2)$ and $B(s_j, \e/2)$ intersect, then the
distance between their corresponding edges $\gamma_i$ and 
$\gamma_j$ is at most $\e$, and so there is a pair of vertices, 
$x_k \in \gamma_i$ and $x_l \in \gamma_j$ with $d(x_k, x_l) \le 3 \e$. 
By Proposition \ref{prop:bound2}, there are at most $L$ other vertices 
within distance $3\e$ of a given vertex.  Therefore the total number 
of  balls intersecting $B(s_i, \e/2)$ is at most $L$, which only depends 
on $\e$.
\end{proof}

If there are $N$ faces in $S_t$ then there are at least $N/L$ disjoint
balls $B(s_i, \e/2)$, each containing a part of $F_t$ of area at least
$A$. Therefore, the total number of faces is at most
\begin{equation}\label{eq:linear surface} 
\Norm{S_t \cap \W} \le \frac{L}{A} \text{area}(F_t),
\end{equation}
where the constants only depend on $\e$, as required. 
\end{proof}

We now show that the bound on the number of faces of $S_t$ in the deep
part $\W$ gives a bound on the genus of $S_t \cap \W$.

\begin{proposition} \label{bounded_genus} %
Let $M$ be a compact hyperbolic $3$-manifold, with an $\e$-regular
Voronoi decomposition $\V$, deep part $\W$, and a generic Morse
function $f \colon M \to \R$.  For $t \in \R$, let $S_t$ be the
polyhedral surface associated to $t$.  Then there is a constant $J$,
which only depends on $\e$, such that:
\[ \norm{\partial ( S_t \cap \W )} \le J \Norm{S_t \cap \W}. \]
\[ \text{genus}(S_t \cap \W) \le J \Norm{S_t \cap \W}.  \]
Where $J$ is the constant from Proposition \ref{prop:bound}.
\end{proposition}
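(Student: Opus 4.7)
The plan is to prove both inequalities by a direct combinatorial count, exploiting the uniform bound on the combinatorial complexity of each deep Voronoi cell supplied by Proposition~\ref{prop:bound}. Each face of $S_t\cap\W$ is a face of some deep Voronoi cell, so by Proposition~\ref{prop:bound} it has at most $J$ edges in its boundary.

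For the first inequality I would observe that a boundary edge of the surface $S_t\cap\W$ is a Voronoi edge lying in at least one face of $S_t\cap\W$, so the total number of such boundary edges is at most $J\Norm{S_t\cap\W}$. Since every boundary component of a compact surface contains at least one edge, this yields $\norm{\partial(S_t\cap\W)}\le J\Norm{S_t\cap\W}$.

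For the genus bound I would view $S_t\cap\W$ as a CW complex with the cell structure induced by the Voronoi decomposition. Let $V$, $E$, $F$ denote the numbers of $0$-, $1$- and $2$-cells, so that $F=\Norm{S_t\cap\W}$. Each face has at most $J$ edges, and every interior edge of $S_t\cap\W$ is shared by exactly two faces (since $S_t$ is an embedded normal surface in the dual triangulation), so double counting gives $E\le JF$. Writing $c$, $g$, $b$ for the numbers of connected components, the total genus, and the total number of boundary components, the identity $\chi(S_t\cap\W)=V-E+F=2c-2g-b$ gives
\[ g \;=\; c - \tfrac{1}{2}\chi - \tfrac{1}{2}b \;\le\; c + \tfrac{1}{2}(E-V-F) \;\le\; F + \tfrac{1}{2}(E-F) \;\le\; \tfrac{1}{2}(J+1)F, \]
using $c\le F$ (each component contains at least one face), $V\ge 0$ and $b\ge 0$. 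Since $J\ge 3$ by the formula $J=J_1^2+2$ from the proof of Proposition~\ref{prop:bound}, this gives $g\le J\Norm{S_t\cap\W}$, as required.

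I do not anticipate any real obstacle here: once the bound on the number of edges per face is in hand, both inequalities reduce to elementary bookkeeping with the Euler characteristic, and the only mild points to verify are that each interior edge of $S_t\cap\W$ is shared by exactly two faces and that each connected component contains at least one face.
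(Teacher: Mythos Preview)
Your proof is correct and follows essentially the same approach as the paper's: both bound the number of boundary components by the total edge count $E\le J\Norm{S_t\cap\W}$, and both bound the genus via the Euler characteristic, using $c\le F$ and $E\le JF$. The only cosmetic difference is that the paper first caps off the boundary with discs and applies $\text{genus}(\widehat{S})=|\widehat{S}|-\tfrac{1}{2}\chi(\widehat{S})$ for the resulting closed surface, whereas you work directly with $\chi=2c-2g-b$ on the surface with boundary; these are equivalent computations yielding the same bound (the paper obtains $(1+J/2)F$, you obtain $\tfrac{1}{2}(J+1)F$, both of which are $\le JF$ once $J\ge 2$).
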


\begin{proof}
We shall write $S$ for $S_t$ to simplify notation.  The first bound
follows as each boundary component must contain at least one edge, so
the number of boundary components is at most the number of edges in $S
\cap \W$, which is at most $J\Norm{S \cap \W}$ by Proposition
\ref{prop:bound}.

We shall write $\widehat{S}$ for the surface $S \cap \W$ with all
boundary curves capped of with discs.  Recall that the genus of a
disconnected surface is the sum of the genera of each component, and
this in turn is equal to the number of connected components minus half
the Euler characteristic, i.e.
\[ \text{genus}(\widehat{S}) = | \widehat{S} | -
\tfrac{1}{2}\chi(\widehat{S}). \]
where $| \widehat{S} |$ is the number of connected components of
$\widehat{S}$.

As capping off with discs does not change the number of connected
components, this is at most the number of connected components of $S
\cap \W$, which is at most the number of faces $\Norm{S \cap \W}$.
Furthermore, capping off boundary components with discs may only
increase the Euler characteristic, so
\[ \text{genus}(\widehat{S}) \le \Norm{S \cap \W} - \tfrac{1}{2}
\chi(S \cap \W). \]
Therefore
\[ \text{genus}(\widehat{S}) \le \Norm{S \cap \W} - \tfrac{1}{2} (V -
E + F), \]
where $V, E$ and $F$ are the numbers of vertices, edges and faces of
$S \cap \W$. As each face of a deep Voronoi cell has at most $J$
edges, this implies
\[ \text{genus}(\widehat{S}) \le (1 + J/2)\Norm{S \cap \W}. \]
As we may assume that $J$ is at least $2$, this gives the second
inequality.
\end{proof}

Proposition \ref{cor:bound} now follows immediately from Propositions
\ref{bounded_faces} and \ref{bounded_genus}.

\subsection{Capped surfaces} \label{section:thickthin}


We have constructed surfaces with bounded complexity in the deep
part. The complement of the deep part is contained in a union of solid
tori by the Margulis Lemma, and we now explain how to cap off the
surfaces in the deep part with surfaces in the solid tori to produce
bounded genus surfaces.

We will use the Margulis Lemma and the \emph{thick-thin} decomposition
for finite volume hyperbolic $3$-manifolds, which we now review.
Given a number $\mu > 0$, let $X_\mu = M_{[\mu, \infty)}$ be the
\emph{thick part} of $M$, i.e. the union of all points $x$ of $M$ with
$\inj_M(x) \ge \mu$.  We shall refer to the closure of the complement
of the thick part as the \emph{thin part} and denote it by $T_\mu =
\overline{M \setminus X_\mu}$.

The Margulis Lemma states that there is a constant $\mu_0 > 0$, such
that for any compact hyperbolic $3$-manifold, the thin part is a
disjoint union of solid tori, and each of these solid tori is a
regular metric neighborhood of an embedded closed geodesic of length
less than $\mu_0$. We shall call a number $\mu_0$ for which this
result holds a \emph{Margulis constant} for $\mathbb{H}^3$. If $\mu_0$
is a Margulis constant for $\mathbb{H}^3$, then so is $\mu$ for any $0
< \mu < \mu_0$, and furthermore, given $\mu$ and $\mu_0$ there is a
number $\delta > 0$ such that $N_{\delta}(T_{\mu}) \subseteq
T_{\mu_0}$.  For the remainder for this section we shall fix a pair of
numbers $(\mu, \e)$ such that there are Margulis constants $0 < \mu_1
< \mu < \mu_2$, a number $\delta$ such that $N_{\delta}(T_{\mu})
\subseteq T_{\mu_2} \setminus T_{\mu_1}$, and $\e = \tfrac{1}{4} \min
\{ \mu_1, \delta \}$. We shall call $(\mu, \e)$ a choice of
\emph{MV}-constants for $\mathbb{H}^3$. This choice of constants
ensures that the deep part $\W$ is non-empty.

Let $(\mu, \e)$ be a choice of $MV$-constants, and consider an
$\e$-regular Voronoi decomposition of $M$. The fact that
$N_{\delta}(T_{\mu}) \subseteq T_{\mu_2} \setminus T_{\mu_1}$ means
that we adjust the boundary of $T_{\mu}$ by an arbitrarily small
isotopy so that it is transverse to the Voronoi cells, and we will
assume that we have done this for the remainder of this section.  Our
choice of $\e$ implies that the thick part $X_\mu$ is contained in the
Voronoi cells in the deep part, i.e. $X_\mu \subset \bigcup_{W_i \in
  \W} W_i$, so in particular $\partial X_\mu = \partial T_\mu$ is
contained in the deep part. Furthermore, as $\e < \delta$, each deep
Voronoi cell hits at most one component of $T_\mu$.

Each boundary component of the surface $S_t \cap X_\mu$ is contained
in $T_\mu$, so $S_t \cap X_\mu$ is a properly embedded surface in
$X_\mu$. We now bound the number of boundary components of $S_t \cap
X_\mu$ in terms of the number of polyhedral faces in the deep part,
$\Norm{S_t \cap \W}$.

\begin{proposition}\label{prop:thick bound}
Let $(\mu, \e)$ be $MV$-constants, and let $M$ be a compact hyperbolic
$3$-manifold with thin part $T_\mu$, an $\e$-regular Voronoi
decomposition $\V$ with deep part $\W$, and a generic Morse function
$f$.  Let $S_t$ be a polyhedral surface in $M$. Then there is a
constant $J$, depending only on $\e$, such that
\[ \text{genus}(S_t \cap X_\mu )  \le J \Norm{S_t \cap \W} \]
and
\[ \norm{ \partial (S_t \cap X_\mu ) } \le 2 J \Norm{S_t \cap \W}. \]
\end{proposition}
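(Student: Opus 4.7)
The plan is to realise $S_t \cap X_\mu$ as a subsurface of $S_t \cap \W$ obtained by cutting along the curves $S_t \cap \partial X_\mu$, and deduce both bounds from Proposition \ref{bounded_genus} together with the strict convexity of distance from a geodesic in $\H^3$.

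Since $X_\mu \subset \bigcup_{W \in \W} W$ and $\partial X_\mu$ lies in the interior of the deep part, the intersection $S_t \cap \partial X_\mu$ is a disjoint collection of simple closed curves in the interior of $S_t \cap \W$. Hence $S_t \cap X_\mu$ is obtained from $S_t \cap \W$ by cutting along these curves and discarding the components that lie outside $X_\mu$. Cutting a compact surface along a simple closed curve and capping off with discs either preserves the total genus (for a separating cut) or decreases it by one (for a non-separating cut), and discarding components can only lower the total further. The first inequality is therefore immediate from Proposition \ref{bounded_genus}.

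For the second inequality, after a small auxiliary isotopy I may assume no closed curve of $\partial X_\mu \cap S_t$ lies inside a single Voronoi face, since any such curve bounds a disc in the face and can be pushed off $\partial X_\mu$. Each boundary component of $S_t \cap X_\mu$ must then cross at least one edge of the Voronoi decomposition in $S_t \cap \W$. I claim that each such deep edge $e$ meets $\partial X_\mu$ in at most two points. Indeed, $e$ is a geodesic segment contained in $B(x_i, \epsilon)$ for each of its three adjacent deep centers $x_i$; the larger ball $B(x_i, 3\epsilon)$ lifts isometrically to $\H^3$ because $\inj_M(x_i) \ge 4\epsilon$, so $e$ lifts to a geodesic segment in $\H^3$. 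By $\epsilon < \delta$ the ball $B(x_i, \epsilon)$ meets at most one component of $T_\mu$, so in the lift $e$ probes at most one surface equidistant from a short geodesic in $\H^3$; since the distance to a geodesic in $\H^3$ is strictly convex, $e$ meets this equidistant surface in at most two points. Summing over the at most $J\Norm{S_t \cap \W}$ edges of $S_t \cap \W$ (by Proposition \ref{prop:bound}) yields $\norm{\partial(S_t \cap X_\mu)} \le 2J\Norm{S_t \cap \W}$.

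The main subtlety, which I would handle carefully, is the transversality isotopy of $\partial T_\mu$ described in the setup: in principle a perturbation of an equidistant surface might destroy the ``at most two intersections'' property with a geodesic. I would finesse this either by choosing a generic Margulis level $\mu \in (\mu_1, \mu_2)$ so that the equidistant surface $\partial T_\mu$ is already transverse to the Voronoi $1$-skeleton without any isotopy, or by observing that a sufficiently small perturbation of a strictly convex surface cannot increase the transverse intersection count with the finite collection of short geodesic edges of $S_t \cap \W$.
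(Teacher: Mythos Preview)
Your argument is correct and rests on the same convexity input as the paper's: a component of $T_\mu$ lifts to a convex set in $\H^3$, so a geodesic edge meets $\partial T_\mu$ in at most two points. The genus bound is identical to the paper's. For the boundary count the only difference is bookkeeping. The paper works face-by-face: since each face $C$ is a totally geodesic convex polygon, $C \cap T_\mu$ is convex in $C$, hence $C \cap \partial T_\mu$ is either a single closed curve or arcs with at most two endpoints per edge; summing gives at most $(\text{number of faces}) + (\text{number of edges}) \le 2J\Norm{S_t \cap \W}$ boundary components, with no further isotopy beyond the initial transversality adjustment. You instead push off the in-face closed curves so that every boundary component crosses an edge, and then (rightly) flag the concern that this perturbation might spoil the two-crossing bound. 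Your proposed fixes are fine, but the step is unnecessary: the same convexity gives at most one in-face closed curve per face, contributing at most $\Norm{S_t \cap \W}$, and your edge-crossing count handles the rest --- so you can simply add the face count instead of isotoping.
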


\begin{proof}
The properly embedded surface $S_t \cap X_\mu$ is obtained from $S_t
\cap \W$ by cutting $S_t \cap \W$ along simple closed curves and
discarding some connected components. This does not increase the
genus, which gives the first bound, using Proposition
\ref{bounded_genus}.

Each face $C$ of a Voronoi cell is a totally geodesic convex polygon,
and a component of $T_\mu$ lifts to a convex set in the universal
cover $\mathbb{H}^3$, so $C \cap T_\mu$ is a convex subset of
$C$. Therefore $C \cap \partial T_\mu$ consists either of a simple
closed curve, or a collection of properly embedded arcs which have at
most two endpoints in each edge of $C$, so there are at most as many
arcs as the number of edges of $C$. Therefore, the number of
components of $C \cap \partial T_\mu$ has at most (number of faces of
$S_t \cap \W$) plus (number of edges of $S_t \cap \W$) components, and
this gives the second bound, again using Proposition
\ref{bounded_genus}.
\end{proof}

We now wish to cap off the properly embedded surfaces $S_t \cap X_\mu$
with properly embedded surfaces in $T_\mu$ to form closed surfaces.
For each torus $T_i$ in $\partial T_\mu$ let $U_i$ be the subsurface
consisting of $\partial T_i \cap M_t$. Let $S_t^+ = (S \cap X_\mu)
\cup \bigcup_i U_i$, and we shall call the resulting closed surface
the \emph{$T$-capped surface} $S_t^+$. We now bound the genus of the
resulting $T$-capped surfaces.

\begin{proposition}\label{prop:bounded_genus}
Let $(\mu, \e)$ be $MV$-constants, and let $M$ be a compact hyperbolic
$3$-manifold with thin part $T_\mu$, an $\e$-regular Voronoi
decomposition $\V$ with deep part $\W$, and a generic Morse function
$f$.  Let $S_t$ be a polyhedral surface in $M$, and let $S_t^+$ be the
corresponding $T$-capped surface. Then there is a constant $K$,
depending only on $\e$, such that
\[ \text{genus}(S_t^+) \le K \text{area}(F_t). \]
Furthermore, for any finite collection of generic points $\{ u_i \}$
in $\R$, the corresponding $T$-capped surfaces $\{ S^+_{u_i} \}$ may
be isotoped to be disjoint.
\end{proposition}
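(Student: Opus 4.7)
The plan is to combine the thick-part bounds from Propositions \ref{prop:thick bound} and \ref{bounded_faces} with the fact that each cap $U_i$ is a planar subsurface of a torus. Write $\Sigma = S_t \cap X_\mu$ and $\Omega = \bigsqcup_i U_i$, so that $S_t^+ = \Sigma \cup \Omega$ is assembled along a common multicurve in $\partial T_\mu$. Since gluing along circles preserves Euler characteristic,
\[ \chi(S_t^+) = \chi(\Sigma) + \chi(\Omega). \]
Writing $c, g, k$ for the numbers of components, the genus, and the number of boundary circles of $\Sigma$, and $c'$ for the number of components of $\Omega$, one has $\chi(\Sigma) = 2c - 2g - k$, and $\chi(\Omega) = 2c' - k$ because each $U_i$ is planar. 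The number of components $C$ of $S_t^+$ satisfies $C \le c + c'$, so
\[ \text{genus}(S_t^+) = C - \tfrac{1}{2}\chi(S_t^+) \le g + k. \]
Proposition \ref{prop:thick bound} bounds both $g$ and $k$ by a constant multiple of $\Norm{S_t \cap \W}$, and Proposition \ref{bounded_faces} bounds $\Norm{S_t \cap \W}$ by a constant times $\text{area}(F_t)$, yielding the first assertion.

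For the disjointness of $\{S_{u_j}^+\}$ with $u_1 < \cdots < u_n$, the thick-part pieces $S_{u_j} \cap X_\mu$ are boundaries of the nested polyhedral regions $P_{u_j} \cap X_\mu$, and the caps $U_i^{(j)}$ form a nested family of subsurfaces in each torus $\partial T_i$ because $M_{u_j}$ is increasing in $u_j$. A small inward normal isotopy of each $S_{u_j} \cap X_\mu$ into the cobordism $(P_{u_{j+1}} \setminus P_{u_j}) \cap X_\mu$ in the thick part, combined with a parallel inward push of $U_i^{(j)}$ along a collar of $\partial T_i$ in the solid-torus component of $T_\mu$ at successively greater depths, makes the $S_{u_j}^+$ pairwise disjoint. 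Coordinating these isotopies using a bicollar of $\partial T_\mu$ ensures that each $S_{u_j}^+$ remains a closed surface after the perturbation.

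The main obstacle is the verification that the boundaries of $\Sigma$ and $\Omega$ coincide as multicurves on $\partial T_\mu$, which is required both for $S_t^+$ to be a closed surface and for the Euler-characteristic additivity to apply. I would handle this via a small isotopy of $\Sigma$ supported in a collar of $\partial T_\mu$, using the transversality of $\partial T_\mu$ to the Voronoi faces guaranteed by the choice of $MV$-constants, so that the matching along $\partial T_\mu$ is exact and the gluing formula is legitimate.
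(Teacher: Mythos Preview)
Your Euler-characteristic computation is clean, but it rests on the assertion that each cap $U_i$ is planar, and this is false. A proper subsurface of a torus need not be planar: removing a small open disc from $\partial T_i$ leaves a once-punctured torus of genus $1$. In general each $U_i \subset \partial T_i$ has at most one component of genus $1$ (a torus with holes), the remaining components being planar. If $p$ denotes the total number of genus-$1$ cap components, then $\chi(\Omega) = 2c' - k - 2p$, and your bound becomes $\text{genus}(S_t^+) \le g + k + p$ rather than $g + k$.

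The missing ingredient is therefore a bound on $p$. The paper supplies this by observing that, with the chosen $MV$-constants, each deep Voronoi cell meets at most one component of $T_\mu$; hence the number of boundary tori that $S_t$ can touch is at most $\Norm{S_t \cap \W}$, giving $p \le \Norm{S_t \cap \W}$. With this term inserted your argument goes through, and is in fact a tidier packaging of the paper's estimate. Your disjointness argument is essentially the paper's: note only that the thick-part pieces $S_{u_j} \cap X_\mu$ are already pairwise disjoint as normal surfaces bounding nested regions, so no isotopy is needed there; only the nested caps must be placed at staggered depths in a collar of $\partial T$ inside the solid torus. The ``main obstacle'' you flag at the end is not one: the boundary of $S_t \cap X_\mu$ and the boundary of $U_i$ are both equal to $S_t \cap \partial T_i$ by construction, so the gluing is automatic once transversality is arranged.
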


\begin{proof}
By Proposition \ref{bounded_faces}, it suffices to bound the genus of the
$T$-capped surface in terms of the number of polyhedral faces of the
surface in the deep part. We will show
\[ \text{genus}(S_t^+) \le (5 J + 1) \Norm{S_t \cap \W}, \]
where $J$ is the constant from Proposition \ref{prop:bound}, which
only depends on $\e$.

Each surface $U_i$ is a subsurface of a torus, and so consists of a
union of planar surfaces, together with at most one surface which is a
torus with (possibly many) boundary components.

Capping off components of $S_t \cap X_\mu$ with planar surfaces cannot
increase the genus by more than twice the number of boundary
components, and capping off with punctured tori increases the genus by
at most the number of boundary components, plus the number of
punctured tori. As each Voronoi cell hits at most one component of
$T_\mu$, there are at most $\Norm{S_t \cap \W}$ components of the
$U_i$ surfaces which may be punctured tori. This implies
\[ \text{genus}(S_t^+) \le \text{genus}(S_t \cap X_\mu) + 2
\norm{ \partial (S_t \cap X_\mu) } + \Norm{S_t \cap \W}. \]
Using the bounds from Proposition \ref{prop:thick bound},
we obtain
\[ \text{genus}(S_t^+) \le (5 J + 1) \Norm{S_t \cap \W} \]
as required.

Finally, we show that for any finite collection of generic points $\{
u_i \}$ in $\R$, we may isotope the corresponding $T$-capped surfaces
to be disjoint. To simplify notation, given a generic point $u_i \in
\R$, we will write $M_i$ and $S_i$ for the corresponding polyhedral
approximation and polyhedral surface determined by $u_i$.

For any two distinct points $u_i < u_j$ in $\R$, the polyhedral
approximation $M_i$ is a strict subset of $M_j$, so $S_i$ and $S_j$
are disjoint normal surfaces.  Let $T$ be a single solid torus
component of $T_\mu$.  Take a small product neighborhood $\partial T
\cross [0, 1]$, and choose the parameterization such that $\partial T
\cross \{ 0 \}$ is equal to $\partial T$, and the product neighborhood
is contained in $T$.  Let $U_{i}$ be the subsurface of $\partial T$
given by $\partial T \cap M_{i}$. Let $U^+_i$ be the properly embedded
surface in the product $\partial T \cross [0, 1]$ given by placing
$U_i$ at depth $ i / n $, together with a product neighborhood of the
boundary $\partial U_i$ connecting $U_i$ to the boundary of $S_{i}$,
i.e.
\[ U^+_i = ( U_i \cross \{ i / n \} ) \cup ( \partial U_i \cross [0,
i/n] ).  \]
As the submanifolds $M_{i}$ are strictly nested, the subsurfaces
$U_i$ are also strictly nested, i.e. $U_i \subset U_j$ for $i < j$,
and so the resulting surfaces $S_{i} \cup U^+_i$ are disjoint.
\end{proof}

\subsection{Bounded handles}

We now bound the number of handles between a pair of $T$-capped
surfaces $S^+_i$ and $S^+_j$ whose corresponding points in $u_i$ and
$u_j$ in $\R$ bound an interval containing a single cell-splitter.

\begin{proposition}\label{bounded_handles} 
Let $(\mu, \e)$ be $MV$-constants, and let $M$ be a hyperbolic
$3$-manifold with an $\e$-regular Voronoi decomposition $\V$, and a
generic Morse function $f \colon M \to \R$. Let $u_1 < u_2$ be a pair
of points in $\R$, which bound an interval containing a single cell
splitter $t$. Let $S^+_1$ and $S^+_2$ be $T$-capped surfaces
corresponding to the level sets for $u_1$ and $u_2$, bounding regions
$P_1$ and $P_2$, with $P_1 \subset P_2$. Then $P_2$ is homeomorphic
to a manifold obtained from $P_1$ by adding at most $60 J^2 \max \{
\Norm{S_i \cap \W} \}$ handles, where $J$ is the constant from
Proposition \ref{prop:bound}, which depend only on $\e$.
\end{proposition}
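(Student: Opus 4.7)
The plan is to describe the cobordism $P_2 \setminus P_1$ explicitly and produce a handle decomposition with the claimed bound. Since only one cell splitter $t$ lies in $(u_1, u_2)$, corresponding to a single Voronoi cell $V$, the polyhedral approximations of $M_{u_1}$ and $M_{u_2}$ differ by exactly $V$. In the thick part this means $P_2 \cap X_\mu = (P_1 \cap X_\mu) \cup (V \cap X_\mu)$, while in the thin part the $T$-capping changes as the level set $F_t$ sweeps through $\partial T_\mu$ for $t$ between $u_1$ and $u_2$, together with the isotopy that places the surfaces at depths $i/n$.

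First, I would handle the thick-part contribution. If $V$ is deep then by Proposition \ref{prop:bound} it is a topological $3$-ball with at most $J$ faces, and each face has at most $J$ edges. Letting $A = V \cap \partial P_1 \subset \partial V$ be the union of faces along which $V$ is attached to $P_1$, a generic Morse function on the ball $V$ realizing the cobordism from $A$ to $\partial V \setminus A$ can be chosen with critical points indexed by the combinatorial simplices in the cellular decomposition of $\partial V$. This produces a handle decomposition with $O(J^2)$ handles. If $V$ is not deep, then since $X_\mu$ is contained in the union of deep cells, $V \cap X_\mu$ has empty interior and the thick-part contribution vanishes.

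Second, I would analyze the thin-part contribution. The $T$-capping isotopy of Proposition \ref{prop:bounded_genus} places each $U_i$ at depth $i/n$ with vertical boundary strips on $\partial T_\mu$. The cobordism between $S_1^+$ and $S_2^+$ inside each component of $T_\mu$ is essentially a product $(U_2 \setminus U_1) \times I$ joined to the thick part by the boundary-strip construction. The handles needed to realize this cobordism are in bijection with the boundary arcs of $U_i$ and their modifications when $V$ meets $\partial T_\mu$; by Proposition \ref{prop:thick bound} their number is at most $2J \Norm{S_i \cap \W}$. When $V$ itself meets a component of $T_\mu$, a Kobayashi--Rieck-type bound \cite{kobayashi-rieck} on the topological complexity of $V \cap X_\mu$ controls the additional attaching data, contributing only $O(J)$ further handles. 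Combining the $O(J^2)$ handles from the thick part with the $O(J \Norm{S_i \cap \W})$ handles from the thin part absorbs, after fixing constants, into the stated bound of $60 J^2 \max\{\Norm{S_i \cap \W}\}$.

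The main obstacle is ensuring coherence of the handle decomposition across the thick--thin interface $\partial X_\mu$: the vertical boundary strips from the $T$-capping isotopy must be matched with the attaching regions of the thick-part handles, and the faces of $V$ that meet $\partial T_\mu$ must be accounted for consistently in both pieces. This bookkeeping is what makes the Kobayashi--Rieck bound on the intersection of a Voronoi cell with the thin part essential, and it is why the final bound naturally picks up the factor $\Norm{S_i \cap \W}$ from the thin-part cap surgery rather than being a pure function of $J$.
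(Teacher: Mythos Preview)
Your overall strategy matches the paper's: isolate the single added Voronoi cell $V$, treat the thick-part attachment and the thin-part modification separately, and invoke Kobayashi--Rieck when $V$ meets $T_\mu$. But several of your steps are either mis-specified or carry the wrong order of magnitude, and one of them is a genuine gap.

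First, your case split ``$V$ deep vs.\ $V$ not deep'' is not the relevant one. Since $\partial T_\mu$ lies in the deep part, a \emph{deep} cell $V$ may very well meet $T_\mu$, and then $V\cap X_\mu$ is \emph{not} a ball; this is exactly the situation Kobayashi--Rieck is for. The paper splits instead on ``$V$ disjoint from $T_\mu$'' versus ``$V$ meets $T_\mu$''. In the disjoint case $V$ is a ball attached along a subsurface of $\partial V$ with at most $J$ boundary curves, and the paper's attachment lemma (attach a compression body along a subsurface of its upper boundary) gives at most $2J$ handles. Your Morse-function argument on the ball would work here too, but the clean statement ``$4\,\text{genus}(P)+2|\partial S|$ handles suffice'' is what organizes all the bookkeeping and replaces your hand-waved $O(J^2)$.

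Second, when $V$ does meet $T_\mu$, your estimate is off by an order. Kobayashi--Rieck gives up to $G=3J$ complementary pieces $V\cap X_\mu$, each a handlebody of genus $\le G$ attached along a surface with $\le G$ boundary components. Feeding each piece through the attachment lemma costs $\le 4G+2G$ handles, so the total is $\le 6G^2 = 54J^2$, not $O(J)$. This is the source of the $J^2$ in the final bound; it does not come from the ball case.

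Third, the thin-part cobordism is not $(U_2\setminus U_1)\times I$. With the nesting $U_1\subset U_2$, the region between the two caps inside a solid torus $T$ is $(U_2\times[0,1])\setminus(U_1\times[0,\tfrac12])$, which is homeomorphic to $U_2\times I$ (a handlebody of genus $\le |\partial U_2|$), unless $U_2=\partial T$, in which case the extra region is a solid torus. In either case the attachment lemma gives $\le 4|\partial U_2|+2|\partial U_1|$ handles, and then the bound $|\partial U_i|\le J\Norm{S_i\cap\W}$ from the face count yields the $4J\Norm{S_2\cap\W}+2J\Norm{S_1\cap\W}$ term. Your product description would miscount both the topology of the region and the attaching surface. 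Summing $54J^2$ with this thin-part term and absorbing constants gives the $60J^2\max\Norm{S_i\cap\W}$ bound.
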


We start with the observation that attaching a compression body $P$ to
a $3$-manifold $Q$ by a subsurface $S$ of the upper boundary component
of $P$, requires a number of handles which is bounded in terms of the
Heegaard genus of $P$, and the number of boundary components of the
attaching surface.

\begin{proposition}\label{bounded_handles3} 
Let $Q$ be a compact $3$-manifold with boundary, and let $R = Q \cup
P$, where $P$ is a compression body of genus $g$, attached to $Q$ by a
homeomorphism along a (possibly disconnected) subsurface $S$ contained
in the upper boundary component of $P$ of genus $g$. Then $R$ is
homeomorphic to a $3$-manifold obtained from $Q$ by the addition of at
most $(4 \text{genus}(P) + 2 \norm{\partial S})$ $1$-and $2$-handles,
where $\norm{\partial S}$ is the number of boundary components of $S$.
\end{proposition}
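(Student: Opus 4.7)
The plan is to construct $R = Q \cup_S P$ from $Q$ by explicit handle attachments, following a natural handle decomposition of the compression body $P$. First, I would use the defining decomposition
\[
P = (\partial_+ P \cross I) \cup (\text{$2$-handles}) \cup (\text{$3$-balls}),
\]
where the $2$-handles are attached along a cut system of disjoint simple closed curves on $\partial_+ P \cross \{0\}$ and the $3$-balls cap off any sphere components produced by the surgeries. By choosing a minimal cut system realizing the given compression body, one can arrange that the number of $2$-handles is at most $g$; the $3$-balls contribute $3$-handles only and are not counted in the statement.

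Next, to extend $Q$ to $Q \cup (\partial_+ P \cross I)$, I would split $\partial_+ P \cross I$ as $(S \cross I) \cup_{\partial S \cross I} ((\partial_+ P \setminus S) \cross I)$. The piece $S \cross I$ is a trivial collar attached to $Q$ along $S \cross \{1\} = S$ and contributes no handles. The piece $(\partial_+ P \setminus S) \cross I$ is then attached along the $\norm{\partial S}$ annuli $\partial S \cross I$. Since $\partial_+ P$ is connected, each connected component $\Sigma_j$ of $\partial_+ P \setminus S$ meets $\partial S$ and so has $b_j \ge 1$ boundary components; let $g_j$ denote its genus. I would take a minimal relative handle decomposition of $\Sigma_j$ rel $\partial \Sigma_j$, which by a direct Euler characteristic count uses at most $2 g_j + b_j$ surface $1$- and $2$-cells (and no $0$-cells, since the $b_j$ annular components can be connected using $1$-handles only). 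Crossing with $I$ turns each such cell into a $3$-dimensional $1$- or $2$-handle attached along the annular collar.

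Summing over components of $\partial_+ P \setminus S$ and using $\sum g_j \le g$ together with $\sum b_j = \norm{\partial S}$ yields at most $2g + \norm{\partial S}$ handles for extending $Q$ to $Q \cup (\partial_+ P \cross I)$. Together with the at most $g$ $2$-handles of the compression body, the total is at most $3g + \norm{\partial S} \le 4\,\text{genus}(P) + 2 \norm{\partial S}$ $1$- and $2$-handles, giving the stated bound.

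The main point requiring verification is the bound $h_2 \le g$ on the number of $2$-handles in the relative handle decomposition of $P$. This follows from the Euler characteristic identity $h_2 - h_0 = g - 1 + \tfrac{1}{2}\chi(\partial_- P)$ (obtained by comparing a handle count of $\chi(P)$ with $\tfrac{1}{2}\chi(\partial P)$), combined with the facts that every component of $\partial_- P$ is a closed surface of genus at least $1$ (so $\chi(\partial_- P) \le 0$) and that a single $3$-ball suffices when $\partial_- P = \emptyset$; in every case a decomposition with $h_2 \le g$ can be realized.
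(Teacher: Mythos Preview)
Your argument is correct and in fact yields a slightly sharper count ($3g + |\partial S|$, which you then relax to the stated $4g + 2|\partial S|$). The overall strategy matches the paper's: write $P = (\partial_+ P \times I) \cup (\text{$2$-handles}) \cup (\text{$3$-balls})$ with at most $g$ compressing $2$-handles, and then count the $1$- and $2$-handles needed to attach $\partial_+ P \times I$ to $Q$ along $S$. The difference lies in how that collar step is handled. The paper cuts the attaching surface $S$ itself into at most $g+b$ discs using at most $2g+b$ arcs (where $b = |\partial S|$), and declares a $1$-handle per arc and a $2$-handle per disc, giving the tally $(2g+b)+(g+b)+g = 4g+2b$. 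You instead pass to the complementary surface $\Sigma = \partial_+ P \setminus S$, take a relative handle decomposition of each component $\Sigma_j$ rel $\partial\Sigma_j$, and thicken by $I$; subadditivity of genus under cutting a closed surface gives $\sum g_j \le g$, hence the tally $(2g+b)+g = 3g+b$. Your version is more transparent, since the $3$-dimensional handles are visibly the pieces of $\Sigma \times I$ glued on along the annuli $\partial S \times I$, and an Euler characteristic check confirms that this is the decomposition producing $Q \cup (\partial_+ P \times I)$.

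One small slip: in your final paragraph the identity should read $h_2 - h_3 = g - 1 + \tfrac{1}{2}\chi(\partial_- P)$, with $h_3$ the number of capping $3$-balls, rather than $h_2 - h_0$. With that correction your verification that one may take $h_2 \le g$ (using $\chi(\partial_- P)\le 0$ when $\partial_- P \ne \emptyset$, and $h_3 = 1$ when $\partial_- P = \emptyset$) goes through exactly as written.
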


\begin{proof}
Recall that the genus of a disconnected surface with boundary is the
sum of the genus of each closed component obtained by capping off all
boundary components with discs. Therefore, the genus of $S$ is at most
the genus of $P$. For a connected surface of genus $g$ with $b$
boundary components, cutting along a non-separating arc with endpoints
in the same boundary component produces a surface of genus $g-1$ with
$b+1$ boundary components. A planar surface with $b$ boundary
components may be cut in to at most $b$ discs by $b-1$ non-separating
arcs. Therefore we may choose at most $2g + b$ arcs which cut the
surface $S$ into at most $g+b$ discs. We can add a $1$-handle to $Q$
for each arc, and then a $2$-handle for each disc, to produce a
manifold $Q_+$ which is homeomorphic to $Q$ union a regular
neighborhood of $\partial P$. We may then form $R$ by adding at most
$g$ $2$-handles. The total number of $1$- and $2$-handles required is
at most $4g+2b$.
\end{proof}

\begin{proof}[Proof (of Proposition \ref{bounded_handles}).]
Let $V$ be the Voronoi cell corresponding to the single cell splitter
$t$ contained in the interval $[u_1, u_2]$. The surfaces $S^+_1$ and
$S^+_2$ are parallel everywhere, except in a regular neighborhood of
$V$. If the Voronoi cell $V$ is disjoint from $T_\mu$, then it is a
ball, and is attached to $P_1$ along a subsurface consisting of a
union of faces of $V$. Therefore the number of boundary components of
the attaching surface is at most $J$, where $J$ is the constant from
Proposition \ref{prop:bound}, so by Proposition
\ref{bounded_handles3}, $P_2$ is obtained from $P_1$ by attaching at
most $2 J$ handles.

If the Voronoi cell $V$ intersects $T_\mu$, then $P_2$ is obtained
from $P_1$ by adding regions of $V \setminus T_\mu$, which we shall
refer to as the \emph{complementary regions}, together with regions of
$T_\mu \cap ( P_2 \setminus P_1 )$. The complementary regions may not
be topological balls, but Kobayashi and Rieck \cite{kobayashi-rieck}
show that they are handlebodies of bounded genus.

\begin{proposition}\cite{kobayashi-rieck}
Let $\mu$ be a Margulis constant for $\mathbb{H}^3$, $M$ be a finite
volume hyperbolic $3$-manifold, let $0 < \e < \mu$, and let $\V$ be a
regular Voronoi decomposition of $M$ arising from a maximal collection
of $\e$-separated points. Then there is a number $G$, depending only
on $\mu$ and $\e$, such that for any Voronoi cell $V_i$, there are at
most $G$ connected components of $V_i \cap X_\mu$, each of which is a
handlebody of genus at most $G$, attached to $T_\mu$ by a surface with
at most $G$ boundary components.
\end{proposition}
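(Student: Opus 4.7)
The plan is to lift to the universal cover $\H^3$, where the Voronoi cell $V_i$ lifts to a convex polyhedron $\tilde V_i$ contained in a ball $B(\tilde x_i, \e)$, and the preimage $\pi^{-1}(T_\mu)$ is a disjoint union of convex sets: Margulis tubes around axes of short loxodromic elements, together with horoballs around parabolic fixed points in the case of cusps. Since the interior of $V_i$ projects homeomorphically to $M$, each connected component of $V_i \cap X_\mu$ corresponds to one or more components of $\tilde V_i \setminus \pi^{-1}(\text{int}\, T_\mu)$ glued according to the face identifications on $\partial \tilde V_i$ coming from covering translations. The strategy is to bound, in turn, the number of tube lifts meeting $\tilde V_i$, the topology of each resulting region in $M$, and the complexity of the attaching surface $V_i \cap \partial T_\mu$.

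First I would bound by a constant $G_1 = G_1(\mu, \e)$ the number of components of $\pi^{-1}(T_\mu)$ that intersect $\tilde V_i$. Any such component contains a point within distance $\e$ of $\tilde x_i$, so its core geodesic or horocycle lies within a uniformly bounded distance of $\tilde x_i$. Since each Margulis tube has radius bounded below by a function of $\mu$, and tubes around distinct conjugacy classes of short-translation-length elements are disjoint, a standard volume packing argument in $B(\tilde x_i, 2\e)$ yields the bound.

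Next I would analyze the topology. Each tube lift $\tilde T_j$ meets $\tilde V_i$ in a convex subset touching $\partial \tilde V_i$; removing the interiors of these bites cuts $\tilde V_i$ into at most $G_1 + 1$ topological balls. Projecting to $M$, two kinds of boundary identifications can glue distinct balls together or add handles: face pairings of $V_i$ by covering translations, of which there are at most $J = J(\e)$ by Proposition \ref{prop:bound}, and identifications of tube-scar disks coming from distinct lifts of the same component of $T_\mu$, of which there are at most $G_1$. Each identification of a pair of disjoint boundary disks in a handlebody yields a handlebody of one higher genus, so each component of $V_i \cap X_\mu$ is a handlebody of genus at most $J + G_1$ and the total number of components is at most $G_1 + 1$. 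The attaching surface $V_i \cap \partial T_\mu$ consists of the tube-scar disks, whose boundaries are built from at most $G_1 \cdot J$ arcs cut by faces of $V_i$; setting $G$ to be the maximum of these three bounds completes the proof.

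The main obstacle is the assertion that each component is genuinely a handlebody rather than a more general compression body or a $3$-manifold with nontrivial second homotopy. In the universal cover each complementary component is a topological ball, so the question reduces to whether the quotient operation of gluing disjoint disks on the boundary preserves the handlebody property. This is true provided the disks being glued are embedded and pairwise disjoint, which is ensured here by convexity together with the MV-constant choice keeping $\e$ small compared to $\mu$, so that no two of these identifications overlap on $\partial \tilde V_i$. Making this rigorous, and verifying that the various identifications cannot conspire to produce unexpected topology, is the technical heart of the Kobayashi and Rieck argument.
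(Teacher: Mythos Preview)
Your approach is genuinely different from the one the paper attributes to Kobayashi and Rieck, and the difference matters. You try to lift to $\H^3$, count the tube lifts meeting $\tilde V_i$, cut them out to get balls, and then track how face and tube-scar identifications glue disks to produce a handlebody. The paper instead reports that Kobayashi and Rieck pick a point $p \in T_\mu \cap V_i$ and use radial projection along geodesic rays from $p$ to $\partial T_\mu$ to exhibit a product structure $V_i \cap X_\mu \cong (V_i \cap \partial T_\mu) \times I$. Since any compact surface with boundary crossed with $I$ is a handlebody, this immediately gives the handlebody conclusion and reduces the genus and boundary-component bounds to bounding the complexity of the single surface $V_i \cap \partial T_\mu$, which follows from convexity of the tube and the face bound. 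No disk-gluing bookkeeping is needed.

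Your route has real gaps as written. The volume-packing bound $G_1$ on tube lifts is not justified: Margulis tubes can have arbitrarily large radius, so the packing argument you sketch does not obviously give a bound depending only on $(\mu,\e)$. You also invoke Proposition~\ref{prop:bound} for the face count $J$, but in the paper that proposition is stated only for deep cells; the bound does hold for any cell in the universal cover by the same volume argument, but you should say so rather than cite the proposition directly. Finally, your last paragraph asserts that making the disk-gluing rigorous ``is the technical heart of the Kobayashi and Rieck argument'' --- it is not. Their argument sidesteps that issue entirely via the product structure. Your outline might be completable, but it is both harder and less direct than what the paper reports, and you have misidentified where the actual work lies.
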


We state a simplified version of their result which suffices for our
purposes. Their stated result involves extra parameters $d$ and $R$,
but if $d$ is chosen close to $0$, then $R$ is close to $\mu$, and we
obtain the result above. Their proof involves showing that in the
universal cover, for any point $p$ in $T_\mu \cap V_i$, projection to
$\partial T_\mu$ along geodesic rays based at $p$ gives a topological
product structure to $V_i \cap X_\mu$ as $(V_i \cap \partial T_\mu)
\cross I$.  An examination of their proof shows that we may choose $G
= 3J$, where $J$ is the constant from Proposition \ref{prop:bound}.
Then Proposition \ref{bounded_handles3} implies that adding the
complementary regions of a Voronoi cell which intersects $\partial
T_\mu$ requires at most $6 G^2 = 54 J^2$ handles.

However, if the Voronoi cell intersects a solid torus component $T$ of
$T_\mu$, then the surfaces $S^+_1$ and $S^+_2$ need not be parallel
inside $T$, and so we now bound the number of handles needed to add
the region corresponding to $(P_2 \setminus P_1) \cap T$.  If $U_2$ is
equal to all of $\partial T$, then the additional region is a solid
torus attached along $\partial T \setminus U_1$, so adding this region
requires at most $4 + 2 \norm{\partial U_1}$ handles, by Proposition
\ref{bounded_handles}.  If $U_2$ is not equal to all of $\partial T$,
then this region is homeomorphic to $(U_2 \cross [0, 1]) \setminus
(U_1 \cross [0, \tfrac{1}{2}])$, and so is homeomorphic to $U_2 \cross
I$, which is a handlebody of genus at most $\norm{\partial U_2}$. The
region is attached along $U_2$, so adding this region requires at most
$4 \norm{\partial U_2} + 2 \norm{\partial U_1}$ handles, and so in
either case, at most $4J \Norm{S_2 \cap \W} + 2J \Norm{S_1 \cap \W}$
are required.

Therefore $P_2$ may be constructed from $P_1$ by adding at most 
\[ 54 J^2 + 4J \Norm{S_2 \cap \W} + 2J \Norm{S_1 \cap \W} \le 60 J^2
\max \{ \Norm{ S_i \cap \W } \} \]
handles, as required.
\end{proof}

The manifold $M$ may be constructed by adding the Voronoi cells in the
order arising from the cell splitters $t_i$ in $\R$. Choose a finite
collection of generic points $\{ u_i \}$, so that each pair of
adjacent cell splitters is separated by one of the $u_i$, and let $\{
S^+_i \}$ be the corresponding collection of $T$-capped surfaces.  The
linear width is at most the largest genus of any surface in the
collection $\{ S^+_{i} \}$, plus the maximum number of handles added
by attaching a single Voronoi cell. Therefore the bounds from
Propositions \ref{prop:bounded_genus} and \ref{bounded_handles} imply
\[ \text{linear width}(M) \le (5J + 1) K \left( \text{Morse area}(M)
\right) + 60 J^2 K \left( \text{Morse area}(M) \right) .\]
As $J$ is at least $1$, this gives
\[ \text{linear width}(M) \le \left( 66 J^2 K \right) \text{Morse
  width}(M).  \]
The constants $J$ and $K$ only depend on the choice of MV-constants,
which may be chosen independently of the hyperbolic $3$-manifold $M$,
and so this completes the proof of the left hand bound of Theorem
\ref{theorem:morse}.

\section{Scharlemann-Thompson width bounds Morse area}\label{section:flow}

In this section we will show linear bounds for Morse area in terms of
Scharlemann-Thompson width, assuming the Pitts and Rubinstein result
Theorem \ref{conjecture:pr}, i.e. we will show the right hand bound of
Theorem \ref{theorem:morse}. This result is due to Gabai and Colding
\cite{colding-gabai}*{Appendix A}, using recent work of Colding and
Minicozzi \cite{colding-minicozzi}, but we give a brief description
for the convenience of the reader, as they do not state this result
explicitly.

We will use properties of a refinement of linear width, known as
\emph{thin position}, which we now describe.  Let $\{ H_i \}$ be the
collection of upper boundaries of compression bodies in the linear
splitting, and let $c(H_i)$ be the complexity of the surface $H_i$,
i.e. the sum of the genera of its connected components. We say that
the complexity of the linear splitting is the collection of integers
$\{c(H_i)\}$, arranged in decreasing order.  A linear splitting which
gives the minimum complexity of all possible linear splittings in the
lexicographic ordering on sets of integers is called a \emph{thin
  position} linear splitting.  Scharlemann and Thompson \cite{st}
showed that thin position linear splittings have the following
property.

\begin{theorem} \cite{st} %
Let $H$ be a linear splitting that is in thin position. Then every
even surface is incompressible in $M$ and the odd surfaces form
strongly irreducible Heegaard surfaces for the components of $M$ cut
along the even surfaces.
\end{theorem}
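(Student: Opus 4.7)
The plan is a proof by contradiction. Suppose $\{H_i\}$ is a thin-position linear splitting yet either some even surface $H_{2i}$ is compressible in $M$, or some odd surface $H_{2i+1}$ fails to be strongly irreducible as a Heegaard surface for the submanifold of $M$ bounded by its adjacent even surfaces $H_{2i}$ and $H_{2i+2}$. In each case I would produce a new linear splitting whose complexity tuple (the sorted list of genera $c(H_j)$) is strictly smaller in lexicographic order, contradicting thin position.

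For the even case, let $D$ be a compressing disc for $H_{2i}$, which after isotopy lies in one of the adjacent compression bodies, say $C_{2i+1}$. The plan is to use $D$ as a thinning move: reinterpret $D$ as an additional 2-handle attached to $C_{2i+1}$ along its lower boundary, producing a compression body $C_{2i+1}'$ with the same upper boundary $H_{2i+1}$ and new lower boundary $H_{2i}'$ obtained by compressing $H_{2i}$ along $D$; correspondingly, rebuild $C_{2i}$ as $C_{2i}'$ with new lower boundary $H_{2i}'$ by appending a dual 1-handle to its handle decomposition. When $\partial D$ is non-separating this strictly decreases $c(H_{2i})$; in the separating case, the two new components of $H_{2i}'$ can be inserted at distinct heights of a refined splitting, replacing the single entry $c(H_{2i})$ by two strictly smaller entries in the lex tuple.

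For the odd case, weak reducibility of $H_{2i+1}$ provides disjoint compressing discs $D_1 \subset C_{2i+1}$ and $D_2 \subset C_{2i+2}$ with $\partial D_j \subset H_{2i+1}$. I would untelescope the pair $(C_{2i+1}, C_{2i+2})$ by performing the $D_1$ compression first and the $D_2$ compression last. The surface $H_{2i+1}$ is replaced by three new surfaces stacked in the splitting: the once-$D_1$-compressed version, the doubly-compressed version (a new even surface), and the once-$D_2$-compressed version; each has strictly smaller total genus than $H_{2i+1}$, so the entry $c(H_{2i+1})$ is replaced by three strictly smaller entries, strictly decreasing the lex tuple.

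The main obstacle will be verifying that the rearranged pieces are genuine compression bodies glued along the correct boundaries, so that the new splitting is actually valid, and that the complexity tuple decreases strictly in every subcase. The key local fact used throughout is that disjoint discs on opposite sides of a surface allow the two compressions to be performed in either order and hence realized as embedded separators in the ambient region; the handle calculus that makes this precise, together with the careful treatment of separating compressions and of sphere components that are capped off, is the content of Scharlemann and Thompson \cite{st}, which we take as the authoritative reference for the theorem.
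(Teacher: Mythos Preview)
The paper does not prove this theorem; it is quoted from Scharlemann--Thompson \cite{st} and used as a black box, so there is no in-paper argument to compare against. Your overall strategy---assume the conclusion fails and produce a strictly thinner linear splitting, contradicting minimality---is indeed the standard one from \cite{st}, and your treatment of the odd (weakly reducible) case is essentially the untelescoping move of Casson--Gordon/Scharlemann--Thompson.

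There is, however, a genuine gap in your even case. You assert that a compressing disc $D$ for the thin surface $H_{2i}$ may, after isotopy, be taken to lie in one of the adjacent compression bodies $C_{2i}$ or $C_{2i+1}$. This is false in general: for any compression body $C$, the lower boundary $\partial_- C$ is \emph{incompressible in $C$} (it is parallel to a subsurface of $\partial_+ C$ through the product structure away from the $2$-handles). Since $H_{2i} = \partial_- C_{2i} = \partial_- C_{2i+1}$, no compressing disc for $H_{2i}$ can be confined to either adjacent compression body, and your proposed move of ``reinterpreting $D$ as an additional $2$-handle on $\partial_- C_{2i+1}$'' does not make sense as written.

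The correct argument is more indirect: a compressing disc for $H_{2i}$ in $M$ must cross some thick surface, and an innermost-disc/outermost-arc analysis (using incompressibility of $\partial_-$ in each compression body) shows that some adjacent thick surface $H_{2i\pm 1}$ then admits disjoint compressing discs on opposite sides, i.e.\ is weakly reducible. One then applies the untelescoping move from your odd case. So the even case is not an independent thinning move but rather feeds back into the odd case; this reduction is precisely the content you defer to \cite{st}, but your sketch misidentifies the mechanism.
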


If follows from Freedman, Hass and Scott \cite{fhs} that the
incompressible surfaces may be chosen to be disjoint least area
minimal surfaces, and in fact the off surfaces may also be chosen to
be disjoint minimal surfaces, see for example Lackenby \cite{lackenby}
or Renard \cite{renard}. In a hyperbolic manifold the intrinsic
curvature of a minimal surface is at most $-1$, so the Gauss-Bonnet
formula gives an upper bound for the area of the minimal
surface. Therefore the area of a minimal surface of genus $g$ is at
most $-2 \pi \chi(S) \le 4 \pi g$.

We say that a hyperbolic $3$-manifold $M$ has \emph{least area
  boundary} if its boundary components are (possibly empty) least area
minimal surfaces, and we say that a Heegaard splitting $H$ for $M$ is
\emph{minimal} if it is isotopic to an unstable minimal surface.  The
right hand bound of Theorem \ref{theorem:morse} is a consequence of
the following result of Colding and Gabai \cite{colding-gabai}, which
constructs bounded area foliations for a pair of compressions bodies
with least area lower boundaries, sharing a common minimal Heegaard
splitting surface.

\begin{theorem}\cite{colding-gabai}\label{theorem:minimal}
Let $M$ be a hyperbolic manifold, with (possibly empty) least area
boundary, with a minimal Heegaard splitting $H$ of genus $g$. Then,
assuming Theorem \ref{conjecture:pr}, the manifold $M$ has a (possibly
singular) foliation by compact leaves, containing the boundary
surfaces as leaves, such that each leaf has area at most $4 \pi g$.
\end{theorem}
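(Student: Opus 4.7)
The plan is to cut $M$ along the minimal Heegaard surface $H$ and produce an area-controlled singular foliation of each resulting compression body by flowing $H$ across it via mean curvature flow, using the Colding--Minicozzi theory to pass through the singularities. First, since $H$ is a closed orientable minimal surface of genus $g$ in a hyperbolic $3$-manifold, its intrinsic Gauss curvature satisfies $K_H \le -1$, so Gauss--Bonnet gives $\text{area}(H) \le -2\pi \chi(H) = 4\pi(g-1) \le 4\pi g$. Cut $M$ along $H$ into two compression bodies $C_1$ and $C_2$; each has upper boundary a copy of $H$ and lower boundary a (possibly empty) union of least area minimal surfaces inherited from $\partial M$. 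It then suffices to construct, for each $C_i$ separately, a singular foliation by compact surfaces of area at most $4 \pi g$ interpolating between $H$ and $\partial_- C_i$, since gluing the two foliations along $H$ yields the desired foliation of $M$.

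To foliate a single compression body $C_i$, I would use mean curvature flow starting from a small push-off of $H$. Because $H$ is produced by the Pitts--Rubinstein min-max construction of Theorem \ref{conjecture:pr}, it is an unstable minimal surface of index at most one, so there is a normal variation pushing $H$ into the interior of $C_i$ along which area strictly decreases. Take $H_0$ to be this small push-off and let $\{H_t\}_{t \ge 0}$ be its mean curvature flow; the first variation formula $\tfrac{d}{dt}\text{area}(H_t) = -\int_{H_t} |\mathbf{H}|^2 \, dA \le 0$ shows that areas are monotonically non-increasing, and hence bounded for all time by $\text{area}(H) \le 4\pi g$. The lower boundary $\partial_- C_i$ is a union of stable minimal surfaces, so by the strong maximum principle they act as one-sided barriers and $H_t$ remains inside $C_i$, never crossing $\partial_- C_i$.

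The main obstacle is controlling the singularities of the flow so that the family $\{H_t\}$ extends to a singular foliation sweeping out all of $C_i$ and limiting onto $\partial_- C_i$. The compression-body structure of $C_i$ forces compressions to occur during the flow, and these are realised as neck-pinch singularities of mean curvature flow. Here I would invoke Colding and Minicozzi's generic mean curvature flow theory, in which generic singularities are modelled on shrinking spheres and cylinders and the flow may be continued past them with a controlled topological transition corresponding to a $2$-handle compression. Each such singular time contributes a singular leaf of the foliation, and since area is non-increasing across each transition the bound $4\pi g$ persists. The final stages of the flow either collapse onto a graph (when $C_i$ is a handlebody) or limit, after finitely many surgeries, onto $\partial_- C_i$: any interior stable minimal limit would bound a product region with $\partial_- C_i$, which can in turn be foliated trivially by equidistant surfaces whose areas remain controlled by the endpoint area. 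The delicate step, and the crux of the Colding--Gabai argument, is precisely this combination of generic-singularity analysis with the barrier argument to guarantee that the flow exhausts $C_i$ and terminates on $\partial_- C_i$.
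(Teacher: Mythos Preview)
Your overall architecture---cut along $H$, push off the unstable minimal surface, run mean curvature flow, and use Colding--Minicozzi to handle singularities---matches the paper's. The substantive gap is in your terminal analysis. You assert that ``any interior stable minimal limit would bound a product region with $\partial_- C_i$,'' but this is neither proved nor true in general: after the flow has undergone compressions, the stable limit $\Gamma$ need not be isotopic to $\partial_- C_i$, and even when it is, equidistant surfaces in a hyperbolic manifold do not have area controlled by the endpoint areas (area of an equidistant surface grows, not shrinks). The paper handles this differently: when the flow stalls at an interior stable minimal surface $\Gamma \neq \partial_- C_i$, the region between $\Gamma$ and $\partial_- C_i$ is again a compression body with stable minimal boundary, its standard Heegaard splitting is strongly irreducible, and one re-invokes Theorem~\ref{conjecture:pr} to produce a new unstable minimal Heegaard surface there and restart the whole argument. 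Termination of this iteration is not automatic; it comes from the Anderson--White finiteness theorem that a compact Riemannian manifold contains only finitely many closed minimal surfaces of bounded genus. You are missing both the iteration and the finiteness ingredient.

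A secondary point: your push-off of $H$ is chosen merely to decrease area, via the index-one variation. To apply the Colding--Minicozzi theory in the form the paper uses (mean convex flow with standard singularities), one needs the initial surface to be genuinely mean convex, not just of smaller area. The paper supplies this via a separate Colding--Gabai lemma producing a smooth \emph{mean convex} product foliation of a neighborhood of the unstable minimal surface, with non-minimal mean convex boundary leaves; it is from these leaves that the flow is launched.
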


As they do not state this explicitly in their paper, we give a brief
outline for the convenience of the reader.

\begin{definition} 
A mean convex foliation on a Riemannian $3$-manifold with boundary is
a smooth codimension-$1$ foliation, possibly with singularities of
standard type, such that each leaf is mean convex.
\end{definition}

In a $3$-manifold a foliation with singularities of ``standard type''
means that almost all leaves are completely smooth (i.e., without any
singularities). In particular, any connected subset of the singular
set is completely contained in a leaf. Furthermore, the entire
singular set is contained in finitely many (compact) embedded
Lipschitz curves with cylinder singularities together with a countable
set of spherical singularities.

The following result is shown by Colding and Gabai
\cite{colding-gabai}.

\begin{theorem} \cite{colding-gabai}*{Appendix A} %
Let $\Sigma$ be an unstable minimal surface in a hyperbolic manifold
$M$. Then there is a regular neighborhood of $\Sigma$ with a smooth
mean convex product foliation $\Sigma_t$, $t \in [-\e, \e]$, with
non-minimal boundary leaves $\Sigma_{-\e}$ and $\Sigma_{\e}$.
\end{theorem}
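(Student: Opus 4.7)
The plan is to construct the foliation as a normal-graph variation of $\Sigma$ in the direction of the first eigenfunction of the Jacobi (stability) operator. Fix a unit normal $\nu$ along $\Sigma$, and let
\[ L = \Delta_\Sigma + |A|^2 + \text{Ric}_M(\nu,\nu) \]
be the Jacobi operator. Since $\Sigma$ is a compact embedded minimal surface, $L$ is a self-adjoint elliptic operator with discrete spectrum, and by the Krein--Rutman / Hopf maximum principle its lowest eigenvalue $\lambda_1$ admits a strictly positive eigenfunction $\phi$. The hypothesis that $\Sigma$ is unstable is precisely the statement $\lambda_1 < 0$, so throughout we have $L\phi = \lambda_1 \phi$ with $\lambda_1 < 0$ and $\phi > 0$ everywhere.

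For $t$ in a small interval $[-\e,\e]$, I would define the candidate foliation by normal graphs,
\[ \Sigma_t = \{ \exp_p(t\phi(p)\nu(p)) : p \in \Sigma \}. \]
Because $\phi$ is smooth and strictly positive on the compact surface $\Sigma$, for $\e$ sufficiently small the map $(p,t)\mapsto \exp_p(t\phi(p)\nu(p))$ is a smooth diffeomorphism from $\Sigma\times[-\e,\e]$ onto a tubular neighborhood of $\Sigma$, giving both the product structure and smoothness of the foliation. To read off mean convexity I would use the classical first-order formula that the derivative of mean curvature under a normal deformation with velocity $u\nu$ equals $Lu$; applied to $u=\phi$ and the minimal base surface $\Sigma$, this yields
\[ H(\Sigma_t)(p) = t\,(L\phi)(p) + O(t^2) = t\,\lambda_1\phi(p) + O(t^2). \]
Since $\lambda_1\phi$ has constant sign and is bounded away from zero by compactness of $\Sigma$, shrinking $\e$ so the $O(t^2)$ error cannot overwhelm the linear term, the mean curvature $H(\Sigma_t)$ is nowhere zero for every $t\in[-\e,\e]\setminus\{0\}$, changes sign only at $t=0$, and in particular the boundary leaves $\Sigma_{\pm\e}$ are non-minimal. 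The constancy of the sign of $H(\Sigma_t)$ on each fixed leaf is exactly the condition that each leaf is mean convex, and the sign flip across $t=0$ (with $\Sigma$ itself as the only minimal leaf) realizes the desired mean convex product foliation of a neighborhood of $\Sigma$.

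The main obstacle is essentially bookkeeping: one must (i) match the sign convention of the first-variation identity $\frac{d}{dt}H\big|_{t=0} = L\phi$ with the sign convention used to call a leaf ``mean convex'' in the Colding--Gabai framework, (ii) make $\e$ small enough that the exponential map is a diffeomorphism on the whole tube and the $O(t^2)$ remainder in the mean-curvature expansion is dominated by $|t\lambda_1\phi|$, using the uniform lower bound $\inf_\Sigma \phi > 0$, and (iii) observe that no extra hyperbolic input is required beyond compactness of $\Sigma$, since the argument is a local deformation computation near a single unstable minimal surface. Each of these is a routine compactness or continuity estimate on the closed surface $\Sigma$, and the heart of the proof is the sign information extracted from $\lambda_1 < 0$ together with positivity of $\phi$.
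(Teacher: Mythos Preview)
Your argument is correct and is exactly the standard construction: deform $\Sigma$ normally by the first Jacobi eigenfunction $\phi>0$, and use $\lambda_1<0$ together with the first-variation formula $\partial_t H|_{t=0}=L\phi=\lambda_1\phi$ to force a definite sign on $H(\Sigma_t)$ for small $t\neq 0$. The points you flag under ``bookkeeping'' are the only things to check, and they are handled by compactness of $\Sigma$ and the strict positivity of $\phi$.

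Note, however, that the paper does not actually supply a proof of this theorem: it is quoted from Colding and Gabai \cite{colding-gabai}*{Appendix~A} and used as a black box. So there is no in-paper proof to compare against. Your write-up is essentially the argument one expects to find in that appendix; the Jacobi-eigenfunction deformation of an unstable minimal surface is the classical mechanism for producing a mean convex product neighborhood, and nothing specific to hyperbolic geometry enters beyond compactness of $\Sigma$, as you observe.
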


In particular, each leaf in the foliation has area at most $4 \pi g$.
As the boundary leaves $\Sigma_{-\e}$ and $\Sigma_{\e}$ are
non-minimal mean convex surfaces, we may apply the mean curvature flow
results of Colding and Minicozzi \cite{colding-minicozzi}, which show
that the mean curvature flow gives rise to a mean convex foliation
with standard singularities. As the mean curvature flow gives a
foliation by surfaces of decreasing area, the only possible
singularities which may arise are disc compressions, $2$-spheres
collapsing to a point or tori collapsing to circles. In particular,
each non-singular leaf bounds a compression body in the interior of
the compression body it is contained in.

If all leaves eventually collapse, then the compression body has empty
lower boundary, i.e. it is a handlebody, and this gives a mean convex
foliation, and hence area decreasing foliation, of the
handlebody. Otherwise, the mean curvature flow limits to a stable
minimal surface $\Gamma$ whose components bound compression bodies
together with the lower boundary of the original compression body.

If the stable minimal surface $\Gamma$ is not equal to the stable
boundary of the compression body, then it bounds a sub-compression
body with stable boundary, whose standard Heegaard splitting is
strongly irreducible, so we may apply the argument again. Anderson
\cite{anderson} and White \cite{white} showed that there are only
finitely many minimal surfaces of bounded genus in a compact
Riemannian manifold, and so this process may occur only finitely many
times, resulting in a foliation of the entire compression body. This
completes the proof of Theorem \ref{theorem:minimal}.

Finally we deduce the right hand bound of Theorem \ref{theorem:morse}
from Theorem \ref{theorem:minimal}.

\begin{proof}[Proof of right hand bound of Theorem
\ref{theorem:morse}.] By Conjecture \ref{conjecture:pr}, the
irreducible Heegaard surface for a hyperbolic $3$-manifold $M$ with
stable boundary is either isotopic to an unstable minimal surface
$\Sigma$, to which we may apply Theorem \ref{theorem:minimal}
directly, or isotopic to a regular neighborhood of a one-sided stable
minimal surface union a small tube parallel to one of the normal
fibers. In the latter case, the Heegaard surface bounds a handlebody
on at least one side, and cutting along the stable one-sided surface
leaves a compression body homeomorphic to the Heegaard surface cut
along the disc corresponding to the tube, where all boundary
components are stable minimal surfaces. As the standard Heegaard
splitting of a compression body is strongly irreducible, we may now
apply Theorem \ref{theorem:minimal} in this case as well.
\end{proof}


\begin{bibdiv}
\begin{biblist}
\bibselect{bibliography}
\end{biblist}
\end{bibdiv}


\bigskip

\noindent Diane Hoffoss \\
University of San Diego \\
\url{dhoffoss@sandiego.edu} \\

\noindent Joseph Maher \\
CUNY College of Staten Island and CUNY Graduate Center \\
\url{joseph.maher@csi.cuny.edu} \\


\end{document}